\def\blfootnote{\xdef\@thefnmark{}\@footnotetext}
\tikzset{every state/.style={
    inner sep=1pt,
    minimum size=4pt,
    }}
\tikzset{every edge/.append style={
%    style=>->,
    line width=rule_thickness,          % rule_thickness
    font=\scriptsize,    %\scriptsize  \tiny
    inner sep =.2em,
    %line cap=round
}}
\newlength\Textht
\newtheorem{thm}{Theorem}[section]
\newtheorem{cor}[thm]{Corollary}
\newtheorem{prop}[thm]{Proposition}
\newtheorem{que}[thm]{Question}
\theoremstyle{definition}
\newtheorem{dfn}[thm]{Definition}
\newtheorem{ex}[thm]{Example}
\newcommand{\N}{\mathbb{N}}
\newcommand{\Z}{\mathbb{Z}}
\newcommand{\F}{\mathbb{F}}
\newcommand{\schreier}[1]{\operatorname{Sch}(#1)}
\newcommand{\Schreier}[1]{\operatorname{Sch}\big( #1 \big)}
\newcommand{\cayley}[1]{\operatorname{Cay}(#1)}
\newcommand{\Cayley}[1]{\operatorname{Cay}\big( #1 \big)}
\newcommand{\GL}{\operatorname{GL}}
\newcommand{\Aut}{\operatorname{Aut}}
\newcommand{\End}{\operatorname{End}}
\newcommand{\im}{\operatorname{Im}}
\newcommand{\leqfg}{\leqslant_{\rm f.g.}}
\newcommand{\leqfi}{\leqslant_{\rm f.i.}}
\newcommand{\gen}[1]{\left\langle#1\right\rangle}
\newcommand{\ord}{\operatorname{ord}}
\begin{document}

\title[Computing $H$-equations with 2-by-2 integral matrices]{Computing {\boldmath $H$}-equations with 2-by-2 integral matrices}

\author{Gemma Bastardas}
\address{Departament de Matem\`atiques, Universitat Polit\`ecnica de Catalunya, CATALONIA.} \email{gemma.bastardas@upc.edu}

\author{Enric Ventura}
\address{Departament de Matem\`atiques and IMTech, Universitat Polit\`ecnica de Catalunya, CATALONIA.} \email{enric.ventura@upc.edu}

\subjclass{20F70, 20H25, 20E05}

\keywords{algebraic, transcendental, $H$-equation, equational coherence, effective coherence, free group, matrix groups.}

\begin{abstract}
We study the transference through finite index extensions of the notion of equational coherence, as well as its effective counterpart. We deduce an explicit algorithm for solving the following algorithmic problem about size two integral invertible matrices: ``given $h_1,\ldots ,h_r; g\in \operatorname{PSL}_2(\Z)$, decide whether $g$ is algebraic over the subgroup $H=\gen{h_1,\ldots ,h_r}\leqslant \operatorname{PSL}_2(\Z)$ (i.e., whether there exist a non-trivial $H$-equation $w(x)\in H*\gen{x}$ such that $w(g)=1$) and, in the affirmative case, compute finitely many such $H$-equations $w_1(x),\ldots ,w_s(x)\in H*\gen{x}$ further satisfying that any $w(x)\in H*\gen{x}$ with $w(g)=1$ is a product of conjugates of $w_1(x),\ldots ,w_s(x)$". The same problem for square matrices of size 4 and bigger is unsolvable.  
\end{abstract}

\maketitle

\section{Introduction}\label{intro}

In classical algebra one of the most basic topics of interest consists in studying \emph{polynomials} over a field $K$, namely $K[x]$, and their solutions in $K$ itself or in some field extension $K\leqslant L$. An element $\alpha \in L$ is said to be \emph{algebraic over $K$} if there exist a non-trivial polynomial with coefficients in $K$, say $0\neq p(x)\in K[x]$, such that $p(\alpha )=0$; otherwise, $\alpha$ is said to be \emph{transcendental} over $K$. The set of all polynomials over $K$ annihilating $\alpha$, namely $I_K(\alpha)=\{p(x)\in K[x] \mid p(\alpha)=0\}\leqslant K[x]$ form an ideal in $K[x]$ (which is non-trivial if and only if $\alpha$ is algebraic over $K$). Since the ring of polynomials $K[x]$ is a principal ideals domain, $I_K(\alpha)$ is generated by a single polynomial $I_K(\alpha)=m_{\alpha}(x)K[x]$; the unique monic such generator $m_{\alpha}(x)$ is called the \emph{minimal polynomial} of $\alpha$ over $K$. An interesting algorithmic problem is then to compute $m_{\alpha}(x)$ in terms of $\alpha$ (this includes deciding whether $\alpha\in L$ is algebraic or transcendental over $K$). 

Analogous questions can be asked and studied in the context of non-commutative Group Theory, where answers and algorithms turn out to be much more complicated. Here, we start with an \emph{extension of groups} $H\leqslant G$ and an element $g\in G$. The analog of the ``ring of polynomials over $K$" is the free product $H*\gen{x}\simeq H*\Z$, and the analog of the ``ideal of polynomials annihilating $\alpha$" is the normal subgroup $I_H(g) \unlhd H*\gen{x}$ of all ``polynomials" $w(x)\in H*\gen{x}$, such that $w(g)=1$. 

Let $G$ be a group and $H\leqslant G$. A univariate \emph{equation over $H$} (or an \emph{$H$-equation}, for short) is a ``polynomial equation" of the form $w(x)=1$, where $w(x)\in H*\gen{x}$, the free product of $H$ and the free (abelian) group of rank 1 generated by the \emph{variable} $x$. Written in its free product canonical form, $w(x)$ is an expression of the form
 \begin{equation}\label{eq:polynomial}
w(x)=h_0 x^{\varepsilon_1}h_1x^{\varepsilon_2}\cdots h_{d-1}x^{\varepsilon_d}h_d,
 \end{equation}
where $\varepsilon_i=\pm 1$, $h_i\in H$, and it is \emph{reduced}, namely  $\varepsilon_{i}=\varepsilon_{i+1}$ whenever $h_i=1$, for $i=1,\ldots ,d-1$. The \emph{degree} of such an $H$-equation is $d=\sum_{i=1}^{d}\vert \varepsilon_i \vert$ (when written in reduced form). It is \emph{balanced} if $\sum_{i=1}^{d} \varepsilon_i =0$. \emph{Constant} $H$-equations are those of degree 0, i.e., $h_0\in H$ and, among them, the \emph{trivial} one is $1\in H\leqslant H*\gen{x}$. It is standard to collect together into higher exponents the various possible consecutive occurrences of $x$ with trivial elements in between; for example, $h_1 x^2 h_2 x^{-2}$ stands for $h_1 x 1 x h_2 x^{-1} 1 x^{-1}$, a balanced equation of degree 4. 

An element $g\in G$ is a \emph{solution} to the $H$-equation $w(x)=1$ if $w(g)=_G1$, that is, if the element of $G$ resulting in substituting $x$ for $g$ in the expression~\eqref{eq:polynomial} is the trivial group element.

The area of Group Theory studying equations and their solutions is known as \emph{algebraic geometry over groups}; see~\cite{BMV99}. Following the language from this reference, a normal subgroup $I\unlhd H*\gen{x_1,\ldots,x_n}$ is called an \emph{ideal}, and the set of its common \emph{zeros} (i.e., the subset of $G^n$ of common solutions to the set of equations $\{ w(x)=1 \mid  w(x) \in I\}$) is an \emph{algebraic set}. There is a vast body of literature about the general problem of solving equations (univariate or multivariate) in a group $G$; see~\cite{BMV99, BDM09, Mak82, Raz87, Raz94} and the references therein. A typical question is to decide, given an equation (or a system of equations), whether it has a solution in $G$ or not and, in the positive case, to describe the set of all such solutions and its structure. Two of the most influential results in this direction are Makanin's and Razborov's theorems, which analyze the case of free groups: the first one (see Makanin~\cite{Mak82}) solves the decidability part of the problem, while the second one (see Razborov~\cite{Raz87, Raz94}) provides a kind of compact algorithmic description of \emph{all} solutions, in case they exist. These are two very deep results, with intricate proofs, and having numerous important applications.

The dual problem started being studied in groups much more recently. Given a group extension $H\leqslant G$ and an element $g\in G$, one says that $g$ is \emph{algebraic over $H$} (\emph{$H$-algebraic}, for short) if there exists a non-trivial $H$-equation $w(x)\in H*\gen{x}$ such that $w(g)=_G 1$; otherwise, $g$ will be called \emph{transcendental over $H$} (\emph{$H$-transcendental}, for short). In~\cite{Ros01} and~\cite{RV24} (see also the preliminary version~\cite{RV21}), the authors introduced these basic notions (under the name of \emph{$H$-dependence} instead of $H$-algebraicity), and studied ideals of $H$-equations in the framework of free groups $\F_n$. In this case, for every finitely generated subgroup $H\leqslant \F_n$ and any $g\in \F_n$, the ideal $I_H(g)$ is always finitely generated as normal subgroup of $H*\gen{x}$. Simultaneously, D. Ascari~\cite{AsTesi, As22} also considered similar questions and further studied the distribution of degrees of the equations in $I_H(g)$, again in the context of free groups. However, when considering the same notions in a general group $G$, the situation may be more complicated because the ideals $I_H(g)$ may very well be not finitely generated, even as normal subgroups of $H*\langle x\rangle$. 

In the classical situation with fields, $K\leqslant L$, the set of $K$-algebraic elements in $L$ form an intermediate field. In the context of groups this is far from true (see the following Example~\ref{ex:depH}(4), or the explicit examples given in~\cite{RV24} for the free group). However, it is straightforward to see that, in general, if $g\in G$ is algebraic over $H\leqslant G$ then so are all the elements in the double coset $HgH$; see~\cite[Obs~1.5]{RV24}. 

\begin{ex}\label{ex:depH}
\begin{enumerate}
\item If $g\in H\leqslant G$, then $g$ is $H$-algebraic, satisfying the obvious degree 1 $H$-equation $g^{-1}x=1$ (also $xg^{-1}=1$). 
\item If $H\leqslant G$ and $g\in G\setminus H$ satisfy $H^g\cap H\neq 1$, then $g$ is $H$-algebraic, satisfying the balanced degree 2 equation $x^{-1}hxh'^{-1}=1$, where $h,h'\in H$ are such that $g^{-1}hg=h'$. It follows that every $g\in G$ is algebraic over any nontrivial normal subgroup $\{1\}\neq H\unlhd G$. In particular, this is the case when $H$ is a non-trivial subgroup of the center $Z(G)$ of $G$.
\item If $H$ is a subgroup of finite index in $G$, then any $g\in G$ is $H$-algebraic: in fact, it satisfies the $H$-equation $h^{-1}x^k=1$, where $k\geqslant 1$ and $h\in H$ are such that $g^k=h$.
\item The set of elements $\{1\}$-algebraic are, precisely, the set of torsion elements of $G$: this follows from the fact that the only equations over the trivial subgroup are $\{1\}*\gen{x}=\{ x^n \mid n\in \Z \}$. \qed
\end{enumerate}
\end{ex}

Let us now look at $H$-equations from another point of view. When $H\leqslant G$ and $g\in G$, we can consider the \emph{evaluation} homomorphism
 \begin{eqnarray*}%\label{eq:phi}
\varphi_{H,g} \colon H*\gen{x} & \to & G, \nonumber \\ h & \mapsto & h, \; \forall h\in H, \\ x & \mapsto & g, \nonumber
 \end{eqnarray*}
which is well defined by the universal property of free products. Then, $(w(x))\varphi_{H,g} =w(g)$. So, in this setting, $g$ is a solution to $w(x)=1$ if and only if $w(x)\in \ker \varphi_{H,g}$. Thus, the set of $H$-equations satisfied by $g$ is, precisely, $I_H(g)=\ker \varphi_{H,g}$, of course, a normal subgroup of $H*\gen{x}$. For later use we introduce the following extra notation: for any other given subgroup $F\leqslant G$, the preimage of $F$ by $\varphi_{H,g}$ is denoted by $I_H(g;F)$. Clearly, 
 $$
I_H(g)=I_H(g;\{1\}) \leqslant I_H(g;F)=(F)\varphi_{H,g}^{-1}=\{w(x)\in H*\gen{x} \mid w(g)\in F\}\leqslant H*\gen{x}.
 $$

In the present paper we set up the situation for the general case of an arbitrary ambient group $G$ and see, as a first result, that imposing finite generation for all these ideals of equations (as normal subgroups) is equivalent to coherence of the ambient group $G$. Then, we introduce the algorithmic version of this notion (effective coherence), and revisit part of the existing literature on the topic; see Section~\ref{coherencia}. After that, in Section~\ref{finite-index}, we consider short exact sequences and see how these notions behave under taking finite index subgroups. Then, in Section~\ref{matrius} we apply our results to 2-by-2 matrices over $\Z$ and we see the unsolvability of the corresponding result for square matrices of size 4 and bigger. 

\subsection*{General notation and conventions}

For a group $G$, $\Aut(G)$ (resp., $\End(G)$) denotes the group (resp., the monoid) of automorphisms (resp., endomorphisms) of $G$. We write them all with the argument on the left, that is, we denote by $(x)\varphi$ (or simply $x\varphi$) the image of the element $x$ by the homomorphism $\varphi$; accordingly, we denote by  $\varphi \psi$ the composition $A \xrightarrow{\varphi} B \xrightarrow{\psi} C$. We will denote by $\GL_m(\Z)$ the group of size $m$ invertible matrices over the integers. %Following the same convention above, when thinking of a matrix $A$ as a map, it will always act on the right of horizontal vectors, $\mathbf{v}\mapsto \mathbf{v}A$. 

Throughout the paper we write $H\leqfg G$ (resp., $H\leqfi G$) to express that $H$ is a finitely generated subgroup (resp., a finite index subgroup) of $G$, reserving the symbol $\leq$ for inequalities among real numbers.

\section{Equivalence with classical coherence}\label{coherencia}

We remind the classical notion of coherence and its algorithmic counterpart. 

\begin{dfn}
Let $G=\langle A \mid R\rangle$ be a finitely presented group. We say that $G$ is \emph{coherent} if every finitely generated subgroup $H\leqfg G$ is finitely presented. Moreover, we say that $G$ is \emph{effectively coherent} if $G$ is coherent and there is an algorithm which, on input $h_1,\ldots ,h_r\in G$ (given as words on $A$), computes a finite presentation for $H=\gen{h_1, \ldots ,h_r}\leqslant G$ on the given generators $h_1,\ldots ,h_r$.
\end{dfn}

A couple of comments are necessary here to make this definition more precise. A group $H$ being finitely presented means that it admits a presentation using finitely many generators and finitely many relations. However, the above definition requires to compute a finite presentation for $H$ (which is, certainly, finitely presented by the coherence hypothesis on the ambient $G$) \emph{on the given generators $h_1,\ldots ,h_r$}. This always makes sense by the following folklore result; see~\cite[Thm. 1.9]{Miller}.

\begin{prop}\label{miller}
Suppose that a finitely presented group $H$ is isomorphic to $H\simeq \langle A\mid R\rangle$, where $A$ is finite. Then, there is a finite subset $R_0\subseteq R$ such that, in the free group $\F(A)$, the normal closures of $R_0$ and $R$ do coincide, i.e., $\ll R_0 \gg_{\F(A)} =\ll R\gg_{\F(A)}$; in particular, $H\simeq \langle A\mid R\rangle =\langle A\mid R_0\rangle$. \qed
\end{prop}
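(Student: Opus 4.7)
The plan is to use the hypothesis that $H$ is finitely presented to produce \emph{some} finite presentation, say $H\simeq\langle B\mid S\rangle$ with both $B$ and $S$ finite, and then compare it with the given (possibly infinite) presentation $\langle A\mid R\rangle$ via Tietze-style change-of-generators maps. The whole argument takes place inside the two free groups $F\coloneqq \F(A)$ and $F'\coloneqq \F(B)$, with normal closures $N\coloneqq \ll R\gg_F$ and $N'\coloneqq \ll S\gg_{F'}$, and one fixed isomorphism $\phi\colon F/N\to F'/N'$.

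First I would lift $\phi$ and $\phi^{-1}$ to actual homomorphisms of free groups. Choosing, for each $a\in A$, a word $w_a\in F'$ representing $\phi(aN)\in F'/N'$ yields a homomorphism $\psi\colon F\to F'$, $a\mapsto w_a$, inducing $\phi$ on the quotients. Symmetrically, choosing for each $b\in B$ a word $v_b\in F$ representing $\phi^{-1}(bN')$ yields $\tau\colon F'\to F$, $b\mapsto v_b$, inducing $\phi^{-1}$. Because $\tau\psi$ and $\mathrm{id}_F$ both induce the identity on $F/N$, for every $a\in A$ the element $\tau(\psi(a))\,a^{-1}$ lies in $N$; likewise, for every $s\in S$, the element $\tau(s)$ lies in $N$ (since $s\in N'$). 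Each of these finitely many elements can be expressed as a finite product of $F$-conjugates of elements of $R^{\pm 1}$. Collecting, from all those expressions, the finitely many relators of $R$ that actually appear, I obtain a finite set $R_0\subseteq R$; it is finite precisely because $A$ and $S$ are.

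Now I would verify $\ll R_0\gg_F = N$. The inclusion $\ll R_0\gg_F \subseteq N$ is trivial. For the converse, set $N_0\coloneqq \ll R_0\gg_F$ and let $q_0\colon F\to F/N_0$ be the projection. By the choice of $R_0$, $q_0(\tau(\psi(a)))=q_0(a)$ for every $a\in A$; since both $q_0\tau\psi$ and $q_0$ are homomorphisms out of $F$ that agree on the free basis $A$, they coincide on all of $F$. Moreover, for every $s\in S$, $\tau(s)\in N_0$ again by the choice of $R_0$. Now pick an arbitrary $r\in R$. Then $rN=N$, so $\psi(r)\in N'$, so $\psi(r)$ is a product of $F'$-conjugates of elements of $S^{\pm 1}$; applying the homomorphism $\tau$ turns it into a product of $F$-conjugates of elements of $\{\tau(s)^{\pm 1}:s\in S\}\subseteq N_0$, hence $\tau(\psi(r))\in N_0$. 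Combining with $q_0(r)=q_0(\tau(\psi(r)))$ gives $r\in N_0$. As $r\in R$ was arbitrary, $N\subseteq N_0$, completing the proof.

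The conceptual content is standard Tietze-transformation bookkeeping; I expect the only slightly delicate step to be the last one, namely making sure that after applying $\tau$ to a product of conjugates expressing $\psi(r)\in N'$, all the \emph{conjugating} letters (which can be arbitrary elements of $F'$, hence map to arbitrary elements of $F$) remain harmless, which they do because the \emph{conjugated} letters already land in $N_0$. Once this is clear, the argument reduces to the observation that a single arbitrary presentation of $H$ on a finite alphabet $A$ can be trimmed, within $R$ itself, to a finite set that has the same normal closure—i.e., the statement we wanted.
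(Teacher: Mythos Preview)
Your argument is correct. The paper does not supply its own proof of this proposition: it is stated as a folklore result with a reference to \cite[Thm.~1.9]{Miller} and closed with a \qed, so there is nothing to compare against line by line. What you have written is precisely the standard Tietze-style argument one finds behind that citation: fix some finite presentation $\langle B\mid S\rangle$, lift the isomorphism and its inverse to free-group maps $\psi,\tau$, and let $R_0$ consist of the finitely many relators from $R$ needed to witness that each $\tau(\psi(a))a^{-1}$ (for $a\in A$) and each $\tau(s)$ (for $s\in S$) lies in $\ll R\gg$. Your verification that $\ll R_0\gg=\ll R\gg$ is clean; the point you flag at the end---that conjugators in $F'$ map to arbitrary elements of $F$ but this is harmless because the conjugated pieces already lie in the normal subgroup $N_0$---is exactly the right observation. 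One cosmetic remark: the paper uses right-action notation for maps (so $\varphi\psi$ means ``first $\varphi$, then $\psi$''), whereas you write $\tau(\psi(a))$ in left-action style; if this were to be inserted into the paper you would want to harmonise the conventions, but mathematically nothing changes.
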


The algorithmic requirement in the above definition is now clear: to compute finitely many relations among $h_1,\ldots ,h_r$, generating all the existing ones as normal subgroup. However, let us make a notational precision for later use. Consider the free group of rank $r$ as the free group on a set of $r$ formal variables, $\F_r=\F(\{x_1, \ldots ,x_r\})$. The group $H$ being (finitely) generated by $h_1,\ldots ,h_r$ means that the natural projection $\pi_H\colon \F_r \twoheadrightarrow H$, $x_1\mapsto h_1$, \ldots, $x_r\mapsto h_r$, is onto. And (after Proposition~\ref{miller}) the group $H$ being finitely presented means that, additionally, $\ker \pi_H$ is finitely generated as normal subgroup of $\F_r$; moreover, computing such a finite presentation means computing finitely many abstract words $w_1(\overrightarrow{x})=w_1(x_1,\ldots ,x_r),\ldots ,w_s(\overrightarrow{x})=w_s(x_1,\ldots, x_r)\in \F_r$ such that $H\simeq \langle x_1,\ldots ,x_r \mid w_1,\ldots ,w_s\rangle =\F_r/\ll w_1,\ldots ,w_s\gg$. Sometimes one abuses notation and just says that $H\simeq \langle h_1,\ldots ,h_r \mid R_1, \ldots ,R_s\rangle$ is a finite presentation for $H$, where $R_i=w_i(h_1,\ldots ,h_r)$, $i=1,\ldots ,s$. For the sake of clarity we prefer not to do so, reserving the notation $x_1,\ldots ,x_r$ and $w(\overrightarrow{x})$ when we refer to the elements up in the free group $\F_r$, and $h_1,\ldots ,h_r$ and $w(h_1,\ldots ,h_r)$ when we refer to their images by $\pi_H$ down in $H\leqslant G$. 

There are many examples of coherent groups in the literature: `small' groups like finite groups, abelian groups, or poly-cyclic groups; but also `big' groups like free groups, surface groups, fundamental groups of 3-manifolds (Scott~\cite{Sco73b}), or ascending HNN extensions of free groups (Feighn--Handel~\cite{FH99}), among others. It is also worth mentioning the characterization of coherence given by Droms~\cite{Dr87} among right-angled Artin groups: a RAAG is coherent if and only if its associated graph does not contain induced cycles of length greater than 3. Or the celebrated recent result by A. Jaikin-Zapirain and M. Linton in~\cite{JL24}, solving a famous conjecture by G. Baumslag: all 1-relator groups are coherent. Additionally, see the excellent recent survey on coherent groups by D. Wise~\cite{wise20}. 

The study of effective coherence started more recently. In~\cite[Prop. 6.1]{Gr99} Z. Grunschlag proved that torsion-free, hyperbolic, locally quasi-convex groups are effectively coherent, a result later extended by I. Bumagin and J. Macdonald in~\cite{BuMc16} to all groups discriminated by those groups. Later, P. Schupp showed that certain Coxeter groups are effectively coherent in~\cite{Sc03}, and I. Kapovich, R. Weidmann and A. Miasnikov  showed in~\cite{KWM05} that coherent RAAGs are, in fact, effectively coherent as well. Then, in~\cite{GrW09}, D. Groves and H. Wilton proved that groups with local retractions (i.e., where every finitely generated subgroup is a retract of a finite index subgroup) are effectively coherent; this includes free groups and limit groups (see also~\cite{BHMS}). Moreover, M. Bridson and H. Wilton considered in~\cite{BW11} the same algorithmic problem but in families of groups not being necessarily coherent, and requiring the tuple of words given as input \emph{to generate a finitely presented subgroup}; they provide interesting families of groups where this variation is unsolvable, even with the word problem being uniformly solvable. 

The first natural examples of effectively coherent groups (included in some of the above mentioned families) are free groups: by Nielsen-Schreier theorem, any (finitely generated) subgroup is again free and, in particular, finitely presented. And the first explicit examples of non-coherent groups are direct products of two or more free non-abelian groups. We remind elementary well-known proofs for these facts. 

\begin{ex}\label{ex-Nielsen}
Since subgroups of free groups are again free, free groups are clearly coherent. The algorithmic version of this fact, namely free groups being effectively coherent, is a folklore result: given $h_1,\ldots ,h_r\in \F_n$, one can apply a sequence of Nielsen transformations to the $r$-tuple of words $(h_1,\ldots, h_r)$ until getting $(1,\ldots ,1, h'_1,\ldots ,h'_s)$, with $0\leq s\leq r$, and $\{h'_1,\ldots ,h'_s\}$ being a free basis of $H=\gen{h_1,\ldots ,h_r}\leqslant \F_n$; in this situation, the sequence of 1's is giving us a complete set of $r-s$ relations among the original generators $h_1,\ldots ,h_r$. More precisely, consider the abstract free group $\F_r=\F(\{x_1, \ldots, x_r\})$ and the epimorphism $\pi\colon \F_r \twoheadrightarrow H\leqslant \F_n$, $x_1\mapsto h_1$, \ldots, $x_r\mapsto h_r$. Applying the same sequence of Nielsen transformations to the $r$-tuple $\overrightarrow{x}=(x_1,\ldots ,x_r)$, we obtain an alternative free basis $(w_1(\overrightarrow{x}),\ldots ,w_r(\overrightarrow{x}))$ for $\F_r$ mapping to $(1,\ldots ,1, h'_1,\ldots ,h'_s)$ by $\pi$. Since $(h'_1,\ldots ,h'_s)$ is a free basis for $H$, we have $\ker \pi =\gen{\gen{w_1(\overrightarrow{x}),\ldots, w_{r-s}(\overrightarrow{x})}}\lhd \F_r$, meaning that $\langle x_1,\ldots ,x_r \mid w_1(\overrightarrow{x}),\ldots ,w_{r-s}(\overrightarrow{x})\rangle$ is a presentation for $H$ on the given generators $h_1,\ldots ,h_r$. \qed  
\end{ex}

\begin{ex}
The typical example of non-coherent group is $\F_2\times \F_2$. This group is very important in algorithmic group theory because it was the first one known to have unsolvable membership problem. Given $n\geq 2$ and a finite presentation $H=\langle a_1,\ldots ,a_n \mid R_1,\ldots ,R_m \rangle$, K.A. Mihailova, in her influential paper~\cite{Mi58}, considered the subgroup of pairs of words in $\F_n$ determining the same element in $H$, 
 $$
M(H)=\{(w_1, w_2)\in \F_n \times \F_n \mid w_1 =_H w_2\}\leqslant \F_n \times \F_n.
 $$
She proved that its obvious elements $(a_1, a_1), \ldots ,(a_n,a_n), (1,R_1),\ldots ,(1,R_m)$ do generate $M(H)$; and, more interestingly, the membership problem for $M(H)$ within $\F_n \times \F_n$ is solvable (i.e., there exists an algorithm to decide whether a given $(w_1, w_2)\in \F_n \times \F_n$ belongs to $M(H)$) if and only if the word problem for $H$ is solvable.

In~\cite[Thm.~B]{Gr78}, F.J. Grunewald proved that if $H$ is infinite then $M(H)$ cannot be finitely presented, a result later generalized by other authors while describing the structure of finitely presented subgroups of $\F_n \times \F_n$. In particular, if $H$ is infinite, $M(H)$ is a finitely generated but non-finitely presented subgroup of $\F_n \times \F_n$; therefore, $\F_n \times \F_n$ is not coherent, for $n\geq 2$. \qed
\end{ex}

Another interesting example of a non-coherent group is Thomson's group $F$: it contains the lamplighter group $\Z \wr \Z$, which is finitely generated but not finitely presented; see~\cite[Cor. 20]{GS99}. 

\medskip 

As explained in the introduction, we are interested in algebraicity and transcendence of elements $g\in G$ with respect to finitely generated subgroups $H$ of a group $G$. We define the corresponding notions with the following suggestive names because they will happen to be equivalent with the corresponding notions about coherence (see Proposition~\ref{prop:equivalence} below). 

%We define now the corresponding notions but restricted to equations. 

\begin{dfn}
Let $G=\langle A \mid R\rangle$ be a finitely presented group. We say that $G$ is \emph{equationally coherent} (\emph{eq-coherent}, for short) if, for every finitely generated subgroup $H\leqfg G$ and every element $g\in G$, the ideal $I_H(g)$ is finitely generated as normal subgroup of $H*\gen{x}$. Similarly, we say that $G$ is \emph{effectively eq-coherent} if $G$ is eq-coherent and there is an algorithm which, on input $h_1,\ldots ,h_r; g\in G$ (given as words on $A$), computes finitely many $H$-equations $w_1(x),\ldots ,w_s(x)\in H*\gen{x}$ such that $I_H(g)=\gen{\gen{w_1(x),\ldots ,w_s(x)}}\lhd H*\gen{x}$, where $H=\gen{h_1,\ldots ,h_r}\leqslant G$. 
\end{dfn}

\begin{ex}
Again, free groups are effectively eq-coherent: suppose we are given $h_1,\ldots ,h_r; g\in \F(A)$ (as words on $A$); after a standard calculation, we can assume that $\{h_1,\ldots ,h_r\}$ form a basis for $H=\gen{h_1,\ldots ,h_r}\leqslant \F(A)$. Now apply the algorithm provided in Example~\ref{ex-Nielsen} to $\{h_1,\ldots ,h_r, g\}$, and compute a presentation for $\gen{H, g}=\gen{h_1,\ldots ,h_r, g}\leqslant \F(A)$ on the generators $\{h_1,\ldots ,h_r, g\}$, say $\gen{H,g}\simeq \langle h_1,\ldots ,h_r, x \mid w_1(x),\ldots ,w_s(x)\rangle$. This means that $I_H(g)=\gen{\gen{w_1(x),\ldots ,w_s(x)}}\lhd H*\gen{x}=\F_{r+1}$. Observe, moreover, that each such non-trivial $w_i(x)$ is a genuine non-constant $H$-equation: each such relation mandatorily involves $g$ because $\{h_1,\ldots ,h_r\}$ were assumed to be freely independent. 

Moreover, an alternative algorithm was provided by A. Rosenmann and E. Ventura in~\cite{RV24}, with a much more interesting geometric flavor: construct the Stallings $A$-automaton for $H=\gen{h_1,\ldots ,h_r}\leqslant \F(A)$ and attach an extra petal to the basepoint spelling the word $g$; then keep applying elementary foldings until getting the Stallings $A$-automaton corresponding to the subgroup $\gen{H,g}\leqslant \F(A)$. There is a natural way to compute a non-constant $H$-equation $w(x)$ satisfying $w(g)=1$ from each \emph{closed} folding in this sequence (one where the two identified edges are parallel, i.e., they already have the same initial vertex and the same terminal vertex). Then, the authors show that the finitely many equations obtained in this way do generate $I_H(g)$ as normal subgroup of $H*\gen{x}$ (see~\cite{DV24} for a general survey on Stallings graph techniques for free groups). In fact, essentially the same algorithm shows that free groups are effectively coherent; see~\cite[Thm. 5.8]{DV24} for details.  \qed
\end{ex}

Note that effective eq-coherence implies solvability of the word problem \texttt{WP}$(G)$. In fact, a bit more: it implies solvability of the \emph{Order Problem} \texttt{OP}$(G)$: given $g\in G$ (as a word on the generators) compute its order, $\ord(g)=\min \{n\geq 1 \mid g^n=_G 1\}\in \N\cup \{\infty\}$.

\begin{prop}\label{OP}
Let $G=\langle A \mid R\rangle$ be a finite presentation. Then, 
 $$
G \text{ effectively eq-coherent } \quad \Rightarrow \quad \texttt{OP}(G) \text{ solvable } \quad \Rightarrow \quad \texttt{WP}(G) \text{ solvable}.  
 $$
Moreover, none of the converse implications hold. 
\end{prop}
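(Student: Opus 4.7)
Both forward implications are short; the delicate points are the two non-converse separations.

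For \emph{effectively eq-coherent} $\Rightarrow$ \texttt{OP}$(G)$ \emph{solvable}, the trick is to feed the eq-coherence algorithm the trivial subgroup. On input $h_1 = 1,\ g\in G$ (as words on $A$), one has $H = \gen{1} = \{1\}$ and $H*\gen{x} = \gen{x}\cong \Z$. By hypothesis, we can compute a finite list $x^{n_1},\ldots,x^{n_s}\in \gen{x}$ that normally generates $I_H(g)$. Since $\gen{x}$ is abelian, normal closure coincides with subgroup generation, and
\[
I_{\{1\}}(g) \;=\; \{ x^n \mid g^n =_G 1\} \;=\; \ord(g)\cdot \gen{x},
\]
with the convention that $\ord(g) = \infty$ means $I_{\{1\}}(g) = \{1\}$. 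Hence $\ord(g) = \gcd(|n_1|,\ldots,|n_s|)$, declaring $\infty$ when the list is empty or all-zero. The implication \texttt{OP}$(G)$ \emph{solvable} $\Rightarrow$ \texttt{WP}$(G)$ \emph{solvable} is the trivial observation that $g =_G 1$ iff $\ord(g) = 1$.

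For the non-converse \texttt{OP} \emph{solvable but not effectively eq-coherent}, the standard witness $G = \F_2 \times \F_2$ works. Being torsion-free, \texttt{OP}$(G)$ reduces to \texttt{WP}$(G)$, which is decidable componentwise; on the other hand, by Mihailova's construction~\cite{Mi58}, $\F_2 \times \F_2$ is non-coherent, hence by the coherence/eq-coherence equivalence (Proposition~\ref{prop:equivalence}) it is not eq-coherent, and in particular not effectively eq-coherent.

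The remaining non-converse, \texttt{WP} \emph{solvable but} \texttt{OP} \emph{not}, is the one I would invoke from the literature rather than reprove: by classical recursion-theoretic constructions in combinatorial group theory, there exist finitely presented groups whose set of torsion elements is recursively enumerable but not recursive, while the word problem itself is decidable. Producing such a group explicitly is the main obstacle to a fully self-contained proof; once cited, the proposition is complete.
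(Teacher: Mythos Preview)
Your proof is correct and matches the paper's approach essentially line for line: the paper also feeds the trivial subgroup into the eq-coherence algorithm to read off $\ord(g)=\gcd(|n_1|,\ldots,|n_s|)$, also uses $\F_2\times\F_2$ (torsion-free, non-coherent) for the first non-converse, and also dispatches the second non-converse by citation. The specific reference the paper gives for a finitely presented group with solvable word problem but unsolvable order problem is Collins~\cite{Co73}; note also that non-coherence of $\F_2\times\F_2$ is Grunewald's theorem~\cite{Gr78} rather than Mihailova's (Mihailova supplies the subgroup, Grunewald the non-finite-presentability).
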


\begin{proof}
Suppose $G$ is effectively eq-coherent. Given $g\in G$, we can apply the hypothesis to compute a finite presentation for the cyclic subgroup $H=\gen{g}\leqslant G$, say $H=\langle x \mid x^{n_1},\ldots ,x^{n_s}\rangle$. In this situation it is clear that $\ord(g)=\gcd(|n_1|,\ldots ,|n_s|)$, proving the first implication. The second is trivial ($g=_G 1$ if and only if $\ord(g)=1$).

The group $G=\F_2 \times \F_2$ serves as a counterexample for the first reciprocal implication: \texttt{OP}$(\F_2 \times \F_2)$ is clearly solvable (since $\F_2 \times \F_2$ is torsion-free), but $\F_2 \times \F_2$ is not coherent. A counterexample for the other implication is more subtle: such a group was first constructed by D. Collins in~\cite{Co73}. In this context, it is also worth mentioning the existence of non-coherent hyperbolic groups (so, with solvable word problem); see~\cite{Ri82}.
\end{proof}

\begin{cor}
The algorithm in the definition of effective eq-coherence further decides whether $g\in G$ is $H$-algebraic or $H$-transcendental.
\end{cor}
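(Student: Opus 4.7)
The plan is to show that, once we have the output $w_1(x),\ldots,w_s(x)\in H*\gen{x}$ produced by the effective eq-coherence algorithm (which generates $I_H(g)$ as a normal subgroup of $H*\gen{x}$), the question ``is $g$ $H$-algebraic?'' reduces to the question ``is $I_H(g)$ non-trivial?'', and this in turn reduces to checking whether any of the finitely many generators $w_i(x)$ is non-trivial in $H*\gen{x}$. The key observation is that a normal closure $\ll w_1,\ldots,w_s\gg$ in any group is trivial if and only if each $w_i$ is trivial; so $g$ is $H$-transcendental precisely when $w_1(x)=\cdots=w_s(x)=1$ in $H*\gen{x}$, and $H$-algebraic otherwise.

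The remaining task is algorithmic: decide, for each $i$, whether $w_i(x)=1$ in $H*\gen{x}$. Since $H*\gen{x}$ is a free product, this is settled by the standard free-product normal form: write $w_i(x)=h_0 x^{\varepsilon_1} h_1 x^{\varepsilon_2} \cdots x^{\varepsilon_d} h_d$ and reduce by canceling factors of the form $x^{\varepsilon} x^{-\varepsilon}$ whenever the $H$-letter between them evaluates to $1$ in $H$. The reduced form is empty if and only if $w_i(x)=1$. Performing this reduction requires an oracle for the word problem in $H$, that is, a procedure to decide whether a given word on the generators $h_1,\ldots,h_r$ represents $1$ in $H$. But membership of $H$ in $G$ is preserved by the word problem: for any word $u$ on the $h_j$'s, we have $u=_H 1$ iff $u=_G 1$. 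By Proposition~\ref{OP}, effective eq-coherence of $G$ implies solvability of \texttt{WP}$(G)$, and hence of \texttt{WP}$(H)$ via the $h_j$'s viewed as words on $A$.

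Thus the complete procedure is: first invoke the effective eq-coherence algorithm to produce $w_1(x),\ldots,w_s(x)$; then, using the word problem in $G$ to handle each $H$-letter, reduce each $w_i(x)$ to its canonical free-product normal form; declare $g$ transcendental over $H$ if all the reductions yield the empty word, and algebraic otherwise. In the algebraic case, any $w_i$ whose normal form is non-empty is a witnessing non-trivial $H$-equation annihilating $g$. There is no real obstacle here beyond observing that the free-product normal form is effective given solvability of \texttt{WP}$(H)$; the corollary is essentially a direct consequence of the definition combined with Proposition~\ref{OP}.
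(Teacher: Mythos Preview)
Your proof is correct and follows essentially the same approach as the paper's: compute the generators $w_1(x),\ldots,w_s(x)$, then use \texttt{WP}$(G)$ (available by Proposition~\ref{OP}) on the coefficients to decide whether each $w_i(x)$ is trivial in $H*\gen{x}$, declaring $g$ transcendental iff all are trivial. You spell out the free-product normal-form reduction more explicitly than the paper does, but the underlying argument is the same.
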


\begin{proof}
After computing generators for $I_H(g)=\gen{\gen{w_1(x),\ldots ,w_s(x)}}\lhd H*\gen{x}$, we can check whether each equation $w_i(x)$ is trivial or not, by using \texttt{WP}$(G)$ applied to its coefficients. Clearly, the element $g$ is $H$-transcendental if and only if $s=0$ or all of them are trivial as equations.
\end{proof}

We end this section by proving that these concepts (both, general and algorithmic versions) with and without equations are, in fact, equivalent. 

\begin{prop}\label{prop:equivalence}
Let $G=\langle A \mid R\rangle$ be a finite presentation. Then, 
\begin{itemize}
\item[(i)] $G$ is coherent if and only if $G$ is eq-coherent;
\item[(ii)] $G$ is effectively coherent if and only if $G$ is effectively eq-coherent.
\end{itemize} 
\end{prop}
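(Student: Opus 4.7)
The bridge between both notions is the evaluation map $\varphi_{H,g}\colon H*\gen{x}\to G$: its image is $\gen{H,g}\leqslant G$ and its kernel is $I_H(g)$, so $(H*\gen{x})/I_H(g)\simeq \gen{H,g}$. Hence a finite normal-generating set for $I_H(g)$ in $H*\gen{x}$ is essentially the same piece of data as a finite presentation of the ``enlarged'' subgroup $\gen{H,g}$ on the generators $h_1,\ldots,h_r,g$, provided we also have a finite presentation of $H$ on $h_1,\ldots,h_r$ (so that $H*\gen{x}$ itself is finitely presented on $h_1,\ldots,h_r,x$). This translation dictionary drives both implications.

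For the direction ``coherent $\Rightarrow$ eq-coherent'', take $H=\gen{h_1,\ldots,h_r}\leqfg G$ and $g\in G$; both $H$ and $K=\gen{H,g}$ are finitely generated subgroups of $G$, hence (by Proposition~\ref{miller}) finitely presented on their given generators. Write $H\simeq \gen{x_1,\ldots,x_r \mid s_1,\ldots,s_t}$ and $K\simeq \gen{x_1,\ldots,x_r,x \mid R_1,\ldots,R_m}$, and combine them through the factorisation
$$\F_{r+1}\twoheadrightarrow H*\gen{x}\xrightarrow{\;\varphi_{H,g}\;} K,$$
whose first arrow has kernel $\ll s_1,\ldots,s_t\gg$ and whose composition has kernel $\ll R_1,\ldots,R_m\gg$. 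Taking images in $H*\gen{x}$ immediately gives $I_H(g)=\ll \overline{R_1},\ldots,\overline{R_m}\gg \lhd H*\gen{x}$, proving finite normal generation. The effective version invokes effective coherence twice (to present both $H$ and $K$) and reads off the $\overline{R_j}$.

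For the converse I would induct on $r$. The base case is clear: cyclic subgroups are finitely presented, and when effectivity is required, the order problem (solvable by Proposition~\ref{OP}) produces the presentation $\gen{x\mid x^{\ord(h_1)}}$ algorithmically. For the inductive step, assume $H_0=\gen{h_1,\ldots,h_{r-1}}$ admits a finite presentation on its generators; then so does $H_0*\gen{x}$ (add $x$ as a free letter, no new relations). Apply eq-coherence of $G$ to $H_0$ and $g=h_r\in G$: it produces finitely many $w_1(x),\ldots,w_s(x)\in H_0*\gen{x}$ normally generating $I_{H_0}(h_r)$. Since $(H_0*\gen{x})/I_{H_0}(h_r)\simeq \gen{H_0,h_r}=H$ via $x\mapsto h_r$, adjoining these $w_i$ to a presentation of $H_0*\gen{x}$ yields the desired finite presentation of $H$ on $h_1,\ldots,h_r$. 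Effective eq-coherence makes each inductive step algorithmic.

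The only real care required---more bookkeeping than mathematical difficulty---is to keep rigorously separated the ``upstairs'' formal letters $x_1,\ldots,x_r,x$ of the free groups from the ``downstairs'' elements $h_1,\ldots,h_r,g$ of $G$, and to verify that the presentations produced are genuinely on the prescribed generators (as demanded by Proposition~\ref{miller} and the algorithmic definitions). Once the three projections $\F_r\twoheadrightarrow H$, $\F_{r+1}=\F_r*\gen{x}\twoheadrightarrow H*\gen{x}$ and $\varphi_{H,g}\colon H*\gen{x}\twoheadrightarrow \gen{H,g}$ are aligned consistently, both implications amount to mild diagram chases.
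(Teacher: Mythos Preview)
Your proof is correct and follows essentially the same route as the paper's: the forward direction pushes a finite presentation of $\gen{H,g}$ through the surjection $\F_{r+1}\twoheadrightarrow H*\gen{x}$ to obtain normal generators of $I_H(g)$, and the converse is the same induction on $r$, factoring $\F_{r+1}\twoheadrightarrow H$ through $H_0*\gen{x}$ and combining the inductive presentation of $H_0$ with the eq-coherence data for $(H_0,h_r)$. The only difference is cosmetic: in the forward direction you also invoke a presentation of $H$ (the relations $s_1,\ldots,s_t$), but this is never actually used---the normal generators $\overline{R_j}$ of $I_H(g)$ come solely from the presentation of $K=\gen{H,g}$, so a single call to (effective) coherence suffices there, as the paper does.
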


\begin{proof}
Suppose $G$ is coherent (resp., effectively coherent). Given $h_1,\ldots ,h_r; g\in G$ (as words on $A$), consider (resp., compute) a finite presentation for $\gen{H,g}=\gen{h_1,\ldots ,h_r,g}\leqslant G$, say 
 $$
\gen{H,g}\simeq \Big\langle x_1,\ldots ,x_r, x_{r+1} \mid w_1(\overrightarrow{x}),\ldots ,w_s(\overrightarrow{x})\Big\rangle.
 $$
It is clear that $I_H(g)=\gen{\gen{w_1(h_1,\ldots ,h_r,x_{r+1}),\ldots ,w_s(h_1,\ldots ,h_r,x_{r+1})}}\lhd H*\gen{x_{r+1}}$. (Note that, by construction, $w_i(h_1,\ldots ,h_r, g)=_G 1$ and so, if $w_i(\overrightarrow{x})$ does not involve $x_{r+1}$, then $w_i(h_1,\ldots ,h_r)=_H 1$ and $w_i(h_1,\ldots ,h_r,x_{r+1})$ is the trivial equation from $H*\gen{x_{r+1}}$ and can be ignored.) This shows the two implications to the right.

Suppose now that $G$ is eq-coherent (resp., effectively eq-coherent). Given $h_1,\ldots ,h_r\in G$ (as words on $A$), we will show the existence (resp., will compute) a finite presentation for $H=\gen{h_1,\ldots ,h_r}\leqslant G$, by induction on $r$. If $r=0$ or $r=1$ this is obvious, since $H$ is trivial or cyclic in these cases (the computability of a specific presentation for the case $r=1$ follows from Proposition~\ref{OP}). Assume the result for $r\geq 1$ and consider the case $r+1$: $H=\gen{h_1, \ldots ,h_r, h_{r+1}}$. Write $H'=\gen{h_1, \ldots ,h_r}\leqslant H\leqslant G$, $\F_{r}=\F(\{x_1,\ldots ,x_r\})$, $\F_{r+1}=\F(\{x_1,\ldots ,x_r, x_{r+1}\})$, and consider the epimorphism $\pi\colon \F_{r+1}\twoheadrightarrow H$, $x_1\mapsto h_1$, \ldots , $x_r\mapsto h_r$, $x_{r+1}\mapsto h_{r+1}$, which factorizes in the obvious way as $\pi=(\alpha*Id)\beta$, 
 $$
\F_{r+1}=\F_r*\gen{x_{r+1}} \stackrel{\alpha*Id}{\,\,\,\twoheadrightarrow\,\,\,} H'*\langle x_{r+1}\rangle \stackrel{\beta}{\twoheadrightarrow} H.
 $$
Observe that, by induction hypothesis, $\ker\alpha$ is finitely generated (resp., and we can compute generators for it) as normal subgroup of $\F_{r}$; hence, the same is true for $\ker (\alpha*Id)=\gen{\gen{\ker\alpha}}\lhd \F_{r+1}$. Moreover, by hypothesis, $\ker \beta=I_{H'}(h_{r+1})$ is also finitely generated (resp., and we can compute generators for it) as normal subgroup of $H'*\gen{x_{r+1}}$. Therefore,  
 $$
\ker \pi=\gen{\gen{\ker (\alpha*Id),\, (\ker \beta)(\alpha*Id)^{-1}}}\lhd \F_{r+1}
 $$
is also finitely generated (resp., and we can compute generators for it) as normal subgroup of $\F_{r+1}$. This completes the proof.
\end{proof}

\section{Eq-coherence and finite index}\label{finite-index}

In this section we see that, as one may expect, coherence and effective coherence go up and down through finite index extensions. Even though being kind of folklore results, we haven't found explicit references in the literature. We give detailed proofs, with special emphasis on the algorithmic version, which will be applied later to the case of integral invertible two-by-two matrices along the next section. Let us begin by reminding some well-known concepts and facts. 

\begin{dfn}
Let $G=\gen{A}$ be a group, generated by the finite set of elements $A$, and let $F\leqslant G$ be a subgroup. The \emph{Schreier graph}, denoted $\schreier{F,G,A}$, is the oriented graph with vertex set $V=F\backslash G$ (the set of left cosets, $Fg$, of $G$ modulo $F$), edge set $E=V\times A$, and incidence functions $\iota, \tau\colon E\to V$ given by $\iota(Fg,a)=Fg$ and $\tau(Fg,a)=Fga$ (we allow parallel edges and loops). The second coordinate of an edge is called its \emph{label}, $\ell(Fg, a)=a\in A$, and $(Fg,a)$ is said to be an \emph{$a$-edge}. By iteratively reading $a\in A$ whenever crossing an $a$-edge ahead, and reading $a^{-1}$ whenever crossing it backwards, any path $p$ from the underlying undirected graph of $\schreier{F,G,A}$ determines a word on $A^{\pm}$ and so, an element $\ell(p)\in G$, called its \emph{label}. 
\end{dfn}

As a direct consequence from $A$ generating $G$, it is well known that (the underlying undirected graph of) $\schreier{F,G,A}$ is always connected, and $2|A|$-regular with exactly one $a$-edge going in, and exactly one $a$-edge going out, from every vertex, and every $a\in A$. It is also clear from the definition that $|V|=[G : F]$ so, $\schreier{F,G,A}$ is finite if and only if $F\leqfi G$. Moreover, any word on $A^{\pm 1}$, say $w=a_{i_1}^{\epsilon_1}\cdots a_{i_n}^{\epsilon_n}$, and any vertex $Fg\in V$, determine a path $p_{_{Fg}, w}$ on (the underlying undirected graph of) $\schreier{F,G,A}$ given by  
 $$
p_{_{Fg},w}\colon Fg \stackrel{a_{i_1}^{\epsilon_1}}{\longrightarrow} F(ga_{i_1}^{\epsilon_1}) \stackrel{a_{i_2}^{\epsilon_2}}{\longrightarrow} \cdots \stackrel{a_{i_n}^{\epsilon_n}}{\longrightarrow} F(ga_{i_1}^{\epsilon_1}\cdots a_{i_n}^{\epsilon_n})
 $$
(understanding $p\stackrel{a^{-1}}{\longrightarrow}q$ as $p\stackrel{a}{\longleftarrow} q$ crossed backwards, $a\in A$). Denote the terminal vertex of such path as $(Fg)\cdot w=F(gw)$, and note that $w\in F$ if and only if $F\cdot w=F$, i.e., if and only if $p_{_{F},w}$ is a closed path. 

Let $T\subseteq \schreier{F,G,A}$ be a maximal subtree. For every edge $e\in E\setminus ET$, we can define the element $w_e=\ell(T[F, \iota e]\, e\, T[\tau e, F])\in G$, where $T[p,q]$ denotes the $T$-geodesic from vertex $p$ to vertex $q$. It is straightforward to see that the set $\{w_e \mid e\in E\setminus ET\}\subseteq F$ generates $F$; see~\cite{DV24}. In particular, finite index subgroups of finitely generated groups are, again, finitely generated. 

We remind three classical results, including a constructive proof for Proposition~\ref{prop: contr Sch}, for later use (see an explicit example in Section~\ref{matrius}).

\begin{prop}[{Todd--Coxeter~\cite[Ch.~8]{J90}}]\label{prop: TC}
Let $G=\langle A \mid R\rangle$ be a finite presentation and let $F\leqfi G$ be a finite index subgroup given by a finite set of generators (as words on $A^{\pm}$). Then we can algorithmically compute the index $[G : F]$, a set of left coset representatives of $G$ modulo $F$, %namely $G=Hg_0 \sqcup Hg_1 \sqcup\cdots \sqcup Hg_t$, 
and draw the Schreier graph $\schreier{F,G,A}$. In particular, we can also solve the membership problem of $F\leqslant G$. \qed
\end{prop}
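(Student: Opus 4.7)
The plan is to carry out a version of the classical Todd--Coxeter coset enumeration procedure, which incrementally builds $\schreier{F,G,A}$ from the input data. Write the input generators of $F$ as words $f_1,\ldots ,f_k$ on $A^{\pm}$, and let $R=\{r_1,\ldots,r_m\}$ be the (finite) set of defining relators of $G$. I would maintain, at each stage, a finite $A$-labeled oriented graph with one distinguished base vertex (intended to represent the coset $F$), whose $a$-edges (for $a\in A^{\pm}$) may be defined or not yet defined. Initially, there is just one vertex and no edges.

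The procedure alternates two kinds of moves. \emph{Definition moves}: whenever the algorithm needs to traverse an $a$-edge (some $a\in A^{\pm}$) out of a vertex $v$ which is not yet defined, introduce a fresh new vertex $v'$ together with the corresponding $a$-edge from $v$ to $v'$. \emph{Coincidence moves}: whenever a relator $r\in R$ traced from some vertex $v$, or one of the generators $f_i$ traced from the base vertex, yields a closed walk on currently defined edges ending at a vertex distinct from the expected one, identify the two endpoints, and cascade any further identifications forced by this folding. I would loop: scan all (vertex, relator) pairs together with the (base vertex, $f_i$) pairs, performing any definition or coincidence move still available.

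The main obstacle, and the only nontrivial point, is \emph{termination}. All identifications carried out by the procedure are logically forced by the two constraints ``every $r\in R$ closes up at every vertex'' and ``every $f_i$ closes up at the base vertex'', both of which are automatically satisfied in the true Schreier graph $\schreier{F,G,A}$. Hence at every stage the partial graph produced by the procedure admits a label-preserving surjection onto $\schreier{F,G,A}$; since $[G:F]$ is finite by hypothesis, the number of vertices never exceeds $[G:F]$, so definition moves must eventually stop, and then so do coincidence moves. When no moves remain, every vertex has a unique outgoing and a unique incoming $a$-edge for each $a\in A$, every relator closes up at every vertex, and every $f_i$ closes up at the base vertex; a standard universal-property argument then identifies the resulting graph with $\schreier{F,G,A}$.

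Once the Schreier graph is drawn, the rest is immediate: the index $[G:F]$ equals its number of vertices; a set of left coset representatives is obtained by choosing a spanning tree rooted at the base vertex and reading off the labels of the unique tree-geodesics from the base vertex to each other vertex; and the membership problem for $F\leqslant G$ is solved by the observation that, for any $w\in G$ given as a word on $A^{\pm}$, one has $w\in F$ if and only if the path $p_{_F, w}$ is closed in $\schreier{F,G,A}$, a condition checkable by a finite walk on the graph.
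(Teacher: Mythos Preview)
The paper does not prove this proposition at all: it is stated with a citation to Johnson's book and a \qed, as a classical result. So there is nothing to compare your argument \emph{to}; what matters is whether your sketch stands on its own.

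Your description of the procedure is fine, and the last paragraph (reading off the index, coset representatives, and membership from the finished Schreier graph) is correct. The gap is in the termination argument. You write that ``at every stage the partial graph produced by the procedure admits a label-preserving surjection onto $\schreier{F,G,A}$; since $[G:F]$ is finite by hypothesis, the number of vertices never exceeds $[G:F]$''. A surjection from the partial graph onto the Schreier graph gives the inequality in the wrong direction: it would say the partial graph has \emph{at least} $[G:F]$ vertices, not at most. What you actually get from ``all identifications are forced'' is a well-defined label-preserving map from the partial graph to $\schreier{F,G,A}$, but this map is in general neither injective nor surjective: before pending coincidences are processed, several partial vertices can sit over the same Schreier vertex. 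In fact it is a standard feature of Todd--Coxeter that the number of live cosets at intermediate stages can exceed the true index by an arbitrary amount, so the bound you claim is simply false.

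The genuine termination proof is more delicate: one needs a \emph{fair} enumeration strategy (every coset eventually has all its edges defined, and every relator is eventually scanned at every surviving coset), and then one argues that once a large enough finite portion has been defined and all relators scanned there, no further coincidences can involve the already-stable part, forcing the process to close up. Either supply that argument or, as the paper does, simply cite the reference.
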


\begin{prop}\label{prop: contr Sch}
Let $G=\langle A \mid R\rangle$ be a finite presentation, $A=\{a_1,\ldots ,a_n\}$, and let $F\leqfi G$ be a finite index subgroup, from which we only assume to know an algorithm solving the membership problem $F\leqslant G$. Then, we can algorithmically compute the index $[G : F]$, a set of left coset representatives of $G$ modulo $F$, draw the Schreier graph $\schreier{F,G,A}$, and compute a finite set of generators for $F$ (even a finite presentation for $F$ as well).
\end{prop}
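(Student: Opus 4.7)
The plan is to run a coset enumeration of Todd–Coxeter flavour, but driven by the membership oracle for $F$ rather than by the relations of $G$. I would maintain a growing list $\mathcal{L}$ of candidate coset representatives, each stored as a word in $A^{\pm}$, initialised to $\mathcal{L}=(1)$, together with a queue $\mathcal{Q}$ of representatives still to process. At each step take some $g\in\mathcal{Q}$ and, for every $a\in A^{\pm}$, test whether there exists $g'\in\mathcal{L}$ with $(g')^{-1}(ga)\in F$, a question answered by a finite number of calls to the membership algorithm. If such a $g'$ is found, record the $a$-edge $Fg\to Fg'$ in the partial Schreier graph being built; if not, adjoin $ga$ to $\mathcal{L}$ and to $\mathcal{Q}$ as a freshly discovered coset representative, and record the new $a$-edge $Fg\to F(ga)$.

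Termination is forced by $F\leqfi G$: the distinct cosets visited form a subset of the $[G:F]$ cosets of $F$ in $G$, so $\mathcal{L}$ can grow at most $[G:F]$ times, after which the queue must empty. At that moment $\mathcal{L}$ is, by construction, a full system of left coset representatives, the index $[G:F]=|\mathcal{L}|$ is thereby revealed, and the edges recorded so far constitute the complete Schreier graph $\schreier{F,G,A}$; crucially, we never need an a priori upper bound on the index.

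Once $\schreier{F,G,A}$ is explicit and finite, I would compute a spanning tree $T$ by breadth-first search from the basepoint $F$, and for each non-tree edge $e$ extract the Schreier generator $w_e=\ell\bigl(T[F,\iota e]\, e\, T[\tau e,F]\bigr)\in F$, obtained as an explicit word in $A^{\pm}$ by concatenating the labels along the tree geodesics and $e$. The classical Reidemeister–Schreier theorem then guarantees that $\{w_e : e\in E\setminus ET\}$ generates $F$. To upgrade from generators to a presentation, apply the Reidemeister–Schreier rewriting process: for every defining relator $r\in R$ and every $\bar g\in\mathcal{L}$, the word $\bar g\, r\, \bar g^{-1}$ labels a closed path at $F$ in $\schreier{F,G,A}$ (since $r=_G 1$ implies $F\bar g\cdot r=F\bar g$), and as such it rewrites algorithmically into an explicit word $\rho_{r,\bar g}$ in the generators $w_e^{\pm 1}$; the finite family $\{\rho_{r,\bar g}\}_{r\in R,\,\bar g\in\mathcal{L}}$ is a complete set of defining relators for $F$ on the generators $\{w_e\}$.

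The only delicate ingredient is termination of the first phase: without knowing $[G:F]$ beforehand, one has to argue that the BFS procedure halts by itself, which is handled automatically once one observes that the queue becomes empty exactly when every vertex of the partial graph has outgoing and incoming $a$-edges for all $a\in A$, an event forced to occur after at most $[G:F]$ enlargements of $\mathcal{L}$. After that, every remaining step—spanning tree, Schreier generators, Reidemeister–Schreier rewriting—is a finite, purely combinatorial manipulation of the resulting labelled graph.
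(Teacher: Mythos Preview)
Your proposal is correct and follows essentially the same route as the paper: a breadth-first exploration of the Schreier graph, using the membership oracle to detect coset coincidences, with termination guaranteed by finiteness of the index, followed by a spanning-tree extraction of Schreier generators and the Reidemeister--Schreier process for a presentation. One small slip: since the paper works with \emph{left} cosets $Fg$, the correct test for $Fg'=F(ga)$ is $(ga)(g')^{-1}\in F$ (equivalently $g'(ga)^{-1}\in F$), not $(g')^{-1}(ga)\in F$; your formula tests equality of right cosets, though of course the algorithm is equally valid once the convention is made consistent.
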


\begin{proof}
Note that we are only given a finite presentation for $G$, and an algorithm which, on input a word $g\in (A^{\pm})^*$, just answers \texttt{Yes}/\texttt{No} according to whether the corresponding element $g\in G$ belongs to the finite index subgroup $F\leqfi G$ or not; nothing else from $F$ is assumed to be known (even the actual index, which must be computed). The strategy is to keep drawing the successively growing balls $B_0, B_1, B_2,\ldots$ in the Schreier graph $\schreier{F,G,A}$ until having it completely depicted. (Define $B_r$ as the full subgraph of $\schreier{F,G,A}$ spanned by those vertices at distance up to $r$ from the vertex $F$.) 

Start with a single vertex $V=\{F\}$ and no edge $E=\emptyset$; to complete the drawing of $B_0$ we need to know whether some edge (or edges) are loops at $F$: the edge $(F, a_j^{\epsilon_j})$ terminates at $F$ if and only if $a_j^{\epsilon_j}\in F$, which can be decided using the membership problem assumption on $F\leqslant G$. To draw $B_1$, look at the (up to) $2n$ neighbors of $F$, namely $Fa_1^{\epsilon_1},\ldots ,Fa_n^{\epsilon_n}$, for $\epsilon_1, \ldots ,\epsilon_n=\pm 1$, drawn with the corresponding edges connecting them to $F$. In order to do this, we need the information on which coset $Fa_i^{\epsilon_i}$ is equal to which $Fa_j^{\epsilon_j}$ (and maybe to $F$); but this can be easily deduced from several applications of the algorithm in the hypothesis: $Fa_i^{\epsilon_i}=Fa_j^{\epsilon_j}$ if and only if $a_i^{\epsilon_i}a_j^{-\epsilon_j}\in F$. To complete the drawing of $B_1$ we need to know whether some extra edges connect two of the existing vertices. This is not going to happen involving the vertex $F$ from $B_0$ because we already have the $2n$ edges incident to it. And for each one of the new vertices, say $Fa_i^{\epsilon_i}$, and each $a_j^{\epsilon_j}\in A^{\pm}$, the edge $(Fa_i^{\epsilon_i}, a_j^{\epsilon_j})$ terminates at $Fa_{k}^{\epsilon_k}$ if and only if $a_i^{\epsilon_i} a_j^{\epsilon_j} a_k^{-\epsilon_k}\in F$. Iteratively repeating this process (which needs an exponentially growing number of calls to the algorithm for membership from the hypothesis), we can draw the bigger and bigger balls $B_2, B_3,\ldots$ of $\schreier{F,G,A}$ until arriving at a big enough $M>0$ such that all the existing vertices in $B_M$ have degree $2n$; at this point, $B_M =B_{M+1}=\schreier{F,G,A}$ and we are done. This is guaranteed to happen after finite time because, by hypothesis, $F$ has finite index in $G$ ---even without knowing this index in advance--- and so, $\schreier{F,G,A}$ is a finite graph.

Once we have computed the full Schreier graph $\schreier{F,G,A}$, the rest is easy: the index $[G : F]$ is its total number of vertices, and a set of left coset representatives of $G$ modulo $F$, is easily computable by just reading the label of an arbitrary path from $F$ to each vertex. Also, a finite set of generators for $H$ easily follows from selecting a maximal tree $T$ and computing the elements $w_e\in F\leqslant G$, for each $e\in E\setminus ET$. Moreover, a presentation for $F$ with respect to the obtained generators can be also computed using the classical Reidemeister--Schreier process; see~\cite{Bo08}.
\end{proof}

% \begin{prop}\label{prop: ses}
% Let $1\to F\to G\to H\to 1$ be a short exact sequence of groups.
% \begin{itemize}
% \item[(i)] If both $F$ and $H$ are finitely presented then $G$ is finitely presented.
% \item[(ii)] If $G$ is finitely presented and $H$ is finite (i.e., $F\leqfi G$) then both $F$ and $H$ are finitely presented.
% \end{itemize}
% \end{prop}

% \begin{proof}
% For~(i), see~\cite[Ch.~10, Cor.~2]{J90}. For~(ii), apply the previous proposition to get finite presentability for $F$; finite presentability for $H$ is clear. Note that (ii) removing the finite index condition (i.e., the converse implication from~(i)) is false in general: just take a non-coherent finitely presented group $G$, and a non-finitely presented (normal) subgroup $F$. \textcolor{red}{ARREGLAR}
% \end{proof}

\begin{prop}\label{prop: finite index}
Let $G$ be a  group and $H\leqfi G$ a finite index subgroup. Then $G$ is finitely presented if and only if $H$ is finitely presented. 
\end{prop}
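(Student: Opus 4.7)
The plan is to treat the two implications separately, leveraging the Schreier graph machinery of Proposition~\ref{prop: contr Sch}.

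For the easy direction ($G$ finitely presented implies $H$ finitely presented), I would appeal to the classical Reidemeister--Schreier process. Starting from a finite presentation $G=\langle A\mid R\rangle$, the (finite) Schreier graph $\schreier{H,G,A}$, and a spanning tree $T\subseteq \schreier{H,G,A}$, the elements $w_e=\ell(T[H,\iota e]\, e\, T[\tau e,H])$ for $e\in E\setminus ET$ provide a finite generating set of $H$ (as recalled just before Proposition~\ref{prop: TC}). For each relator $r\in R$ and each vertex $Hg\in H\backslash G$, the path $p_{Hg,r}$ is closed in $\schreier{H,G,A}$ (since $r=_G 1$) and, when rewritten in terms of the $w_e$'s, yields a word representing $1$ in $H$. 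The Reidemeister--Schreier theorem guarantees that these $|R|\cdot [G:H]$ (finitely many) words generate the relation kernel normally, producing the desired finite presentation of $H$.

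For the converse (``going up''), the main obstacle is that $H$ need not be normal in $G$, so $G$ is not directly an extension of the finite group $G/H$ by $H$. I would circumvent this by passing first to the normal core $H_0=\bigcap_{g\in G} gHg^{-1}$, a finite intersection of $[G:H]$ conjugates of $H$, hence a normal finite-index subgroup of $G$ contained in $H$ with $H_0\leqfi H$. Applying the easy direction \emph{inside} $H$, the finite presentation of $H$ descends to one of $H_0$. This reduces the problem to the case where the finitely presented subgroup is normal in $G$.

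Assume then $H\lhd G$, $H=\langle b_1,\ldots,b_n\mid r_1,\ldots,r_m\rangle$, and fix a left transversal $\{t_1=1,t_2,\ldots,t_k\}$. Normality yields, for each $i,j$, an identity $t_j^{-1}b_i t_j = u_{ij}(b_1,\ldots,b_n)$ in $H$, and, for each $j,l$, an identity $t_j t_l = v_{jl}(b_1,\ldots,b_n)\, t_{m(j,l)}$ in $G$, for a specific index $m(j,l)\in\{1,\ldots,k\}$ and specific words $u_{ij}, v_{jl}$. The finite candidate presentation for $G$ is then
\[
\bigl\langle\, b_1,\ldots,b_n,\, t_2,\ldots,t_k \,\bigm|\, r_\alpha,\;\; t_j^{-1} b_i t_j\, u_{ij}^{-1},\;\; t_j t_l\, t_{m(j,l)}^{-1} v_{jl}^{-1} \,\bigr\rangle,
\]
with the convention $t_1=1$ where needed. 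All these relations hold in $G$, so there is a natural epimorphism from the abstract group onto $G$. Injectivity, which is the most delicate step, follows by the usual collection process: the conjugation relations push every word into the normal form $w(\vec b)\cdot t_j$, the multiplication relations make this form compatible with the coset structure, and the relations $r_\alpha$ ensure uniqueness of $w$ modulo $H$; hence the only trivial normal forms are those with $j=1$ and $w=_H 1$. I would carry out this verification by induction on word length, and this is the one point where some care, rather than novelty, is required.
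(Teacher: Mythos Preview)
The paper does not actually prove this proposition: its entire ``proof'' is a pair of citations to Johnson's textbook~\cite{J90}. Your proposal, by contrast, sketches the standard argument in full, and it is essentially correct. The Reidemeister--Schreier direction is exactly what Johnson does (and what the paper alludes to at the end of Proposition~\ref{prop: contr Sch}); the reduction to the normal case via the core and the explicit extension-type presentation for the ``going up'' direction are likewise the classical route.

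One small point of care in your normal-form argument: with the conjugation relations written as $t_j^{-1} b_i t_j = u_{ij}$, the rewriting $b_i t_j \to t_j u_{ij}$ naturally pushes the $t$'s to the \emph{left}, so the collected form is $t_j\cdot w(\vec b)$ rather than $w(\vec b)\cdot t_j$. Either convention works, but make sure the direction of your conjugation relations matches the normal form you aim for; otherwise you will also need to argue (using the multiplication relations to produce inverses of the $t_j$) that conjugation by each $t_j$ is invertible on the $b$-subgroup of the abstract group. This is exactly the ``care, rather than novelty'' you flagged.
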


\begin{proof}
See~\cite[Ch.~9, Cor.~1]{J90} and~\cite[Ch.~10, Cor.~3]{J90}. 
\end{proof}

We conclude this section by seeing that coherence, and effective coherence, go up and down finite index steps. 

\begin{thm}\label{thm: main}
Let $G$ be a group and $F\leqfi G$ a finite index subgroup. Then, 
\begin{itemize}
\item[(i)] $F$ is coherent if and only if $G$ is coherent.
\end{itemize}
Moreover, if $G=\langle A \mid R\rangle$ is given by a finite presentation and $F\leqfi G$ by a finite set of generators as words on $A^{\pm}$ (or, equivalently, by an algorithm solving the membership problem $F\leqslant G$) then, 
\begin{itemize}
\item[(ii)] $F$ is effectively coherent if and only if $G$ is effectively coherent.
\end{itemize}
\end{thm}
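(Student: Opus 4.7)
The plan is to prove both equivalences by the same intersection trick: for any $H\leqfg G$, the subgroup $K=H\cap F$ has finite index in $H$ (since $F\leqfi G$), coherence of $F$ controls $K$ as a subgroup of $F$, and Proposition~\ref{prop: finite index} lifts this control from $K$ back up to $H$. The easy half of part~(i) is immediate: any $H\leqfg F$ is also a finitely generated subgroup of $G$, so coherence of $G$ makes it finitely presented. For the other half, let $H\leqfg G$ and set $K=H\cap F$; then $K\leqfi H$ gives $K$ finitely generated, $K\leqfg F$ combined with coherence of $F$ gives $K$ finitely presented, and Proposition~\ref{prop: finite index} yields that $H$ is finitely presented.

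For the forward direction of part~(ii), I would first invoke Proposition~\ref{prop: contr Sch} to compute a finite presentation $\gen{B \mid S}$ of $F$ together with $A^{\pm}$-words $u_b$ expressing each $b\in B$ as an element of $G$. Given input $h_1,\ldots,h_r\in F$ as words on $B^{\pm}$, I would substitute $b\mapsto u_b$ to read them as words on $A^{\pm}$ and then feed them into the effective coherence algorithm of $G$; the returned presentation of $\gen{h_1,\ldots,h_r}$ is already on the requested generators.

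The backward direction of~(ii) is the main content. Given $h_1,\ldots,h_r\in G$ on $A^{\pm}$, write $H=\gen{h_1,\ldots,h_r}$ and $K=H\cap F$. First, apply Proposition~\ref{prop: contr Sch} to produce a finite presentation $\gen{B \mid S}$ of $F$ together with a rewriting procedure that converts any $A^{\pm}$-word known to represent an element of $F$ into a $B^{\pm}$-word (by lifting the closed path in $\schreier{F,G,A}$ against a chosen spanning tree). Second, build the ``relative'' Schreier graph $\Gamma$ of $K\leqslant H$ with respect to the generating tuple $(h_1,\ldots,h_r)$, in the style of the proof of Proposition~\ref{prop: contr Sch}: each coset-equality test reduces to deciding whether a word on the $h_i$'s lies in $F$, which is available via Proposition~\ref{prop: TC}, and termination is forced by $K\leqfi H$. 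From $\Gamma$ read off a Schreier transversal $T=\{1=t_1,\ldots,t_k\}$ of $K$ in $H$ and a finite set of Schreier generators $s_1,\ldots,s_m$ of $K$, each expressed as a word on $\{h_1,\ldots,h_r\}^{\pm}$. Third, convert each $s_l$ to a word on $B^{\pm}$ using step one and invoke the effective coherence algorithm of $F$ to obtain a finite presentation of $K$ on the generators $s_1,\ldots,s_m$. Fourth, assemble a finite presentation of $H$ on $h_1,\ldots,h_r$ by running the constructive (inverse Reidemeister--Schreier) counterpart of Proposition~\ref{prop: finite index}, taking as relators (a) the pullbacks into $h_i$-words of the defining relators of $K$, and (b) the coset multiplication relations $t_j h_i = s_{j,i}\, t_{j,i}$ read off $\Gamma$, where $t_{j,i}\in T$ is the representative of the coset $K t_j h_i$ and $s_{j,i}=t_j h_i t_{j,i}^{-1}$.

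The main obstacle I expect is step four: extracting from the classical proof of Proposition~\ref{prop: finite index} an explicit algorithm that returns a presentation on the prescribed generating set $h_1,\ldots,h_r$, rather than on an enlarged tuple mixing transversal elements and Schreier generators. Everything else is a routine assembly of the tools already developed in Propositions~\ref{prop: TC} and~\ref{prop: contr Sch} together with the hypothesis on $F$.
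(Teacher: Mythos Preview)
Your argument for~(i) matches the paper's exactly. For the leftward direction of~(ii) you supply more detail than the paper (which just says ``clear from the definitions''), and your conversion via Proposition~\ref{prop: contr Sch} is the right way to make that precise.

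For the hard direction of~(ii) you take a genuinely different route. The paper first replaces $F$ by its core so as to assume $F\unlhd G$, and then works through the equational reformulation of Proposition~\ref{prop:equivalence}: for given $h_1,\ldots,h_s;g$ it computes $I_H(g;F)\leqfi H*\gen{x}$ via the finite quotient $G/F$, evaluates its generators at $g$ to land in $F$, invokes effective coherence of $F$ on those values, and substitutes back to obtain normal generators of $I_H(g)$. You instead compute $K=H\cap F$ directly and lift a presentation of $K$ to one of $H$.

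Your approach is correct, and the obstacle you flag in step four is lighter than you fear. Let $\pi\colon \F_r\twoheadrightarrow H$, $x_i\mapsto h_i$, and set $K'=\pi^{-1}(K)\leqfi \F_r$. The Schreier graph $\schreier{K,H,\{h_1,\ldots,h_r\}}$ is canonically isomorphic to $\schreier{K',\F_r,\{x_1,\ldots,x_r\}}$ (since $K'=\pi^{-1}(K)$ and $\pi$ is onto), so your Schreier generators $s_1,\ldots,s_m$ lift to words $\sigma_1,\ldots,\sigma_m\in \F_r$ that \emph{freely} generate $K'$. Because $\ker\pi\leqslant K'$, we have $\ker\pi=\ker(\pi|_{K'})=\gen{\gen{R_j(\sigma_1,\ldots,\sigma_m)}}_{K'}$, where the $R_j$ are the relators returned by $F$'s algorithm; and since $\ker\pi\unlhd\F_r$, this equals $\gen{\gen{R_j(\sigma_1,\ldots,\sigma_m)}}_{\F_r}$. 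Hence $H\simeq\langle x_1,\ldots,x_r\mid R_j(\sigma_1,\ldots,\sigma_m)\rangle$: only your relators~(a) are needed, and the coset relations~(b) already hold in $\F_r$ and add nothing.

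As for what each approach buys: yours avoids the reduction to $F\unlhd G$ and stays within the classical coherence language; the paper's equational detour is chosen deliberately (and it says so) because it directly yields the algorithm for computing $I_H(g)$ that is then exercised on $\operatorname{PSL}_2(\Z)$ in Section~\ref{matrius}.
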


\begin{proof}
The two implications to the left are clear from the definitions.

Assume now that $F$ is coherent, i.e., any finitely generated subgroup of $F$ is finitely presented. If $H\leqfg G$ then $H\cap F\leqfi H\cap G=H\leqfg G$; hence, $H\cap F\leqfg F$ and, by hypothesis, $H\cap F$ is finitely presented. By Proposition~\ref{prop: finite index}, since $H\cap F\leqfi H$, $H$ is finitely presented as well. This completes the proof of~(i).

To show (ii), we have to be more accurate with the algorithmic treatment. In the following lines, we present an argument avoiding the computation of the intersection $H\cap F$ and working, instead, with equations. This paves the land for the application given in the next section. By hypothesis, we are given a finite presentation for $G$, namely $G=\langle A \mid R\rangle=\langle a_1,\ldots ,a_n \mid r_1,\ldots ,r_m\rangle$, and finitely many words on $A^{\pm 1}$, say $f_1,\ldots ,f_r$, generating an effectively coherent finite index subgroup $F=\gen{f_1,\ldots ,f_r}\leqfi G$; in particular, we have an algorithm such that, given finitely many words on $f_1,\ldots ,f_r$, it computes a presentation for the subgroup of $F$ they generate (and with respect to the given generators). We will prove that $G$ is effectively coherent under the equivalent form of effective eq-coherence; see Proposition~\ref{prop:equivalence}(ii). 

As a preparation, let us see that, without loss of generality, we can additionally assume that $F\lhd G$ with $G/F$ being a known finite group; in other words, we can assume that, together with $F$, we are also given a short exact sequence $1\to F\to G\to L\to 1$, where $L$ is a known finite group. In fact, compute the Schreier graph $\schreier{F,G,A}$, using Todd--Coxeter Proposition~\ref{prop: TC} (in particular, we can solve membership of $F\leqslant G$). Then, look at the natural action of $G$ on its (finite!) set of vertices $V=F\backslash G$, 
 $$
\begin{array}{rcl} \cdot\,\, \colon G & \to & \operatorname{Sym}(V) \\ g & \mapsto & \!\!\! \begin{array}[t]{rcl} \cdot g \colon V & \to & V \\ Fx & \mapsto & Fxg. \end{array} \end{array}
 $$
By construction, this is nothing else but a group homomorphism $\cdot$ from $G$ to the finite symmetric group $\operatorname{Sym}(V)=S_N$, where $N=|V|$. Note that $g\in F \Leftrightarrow Fg=F \Leftrightarrow \cdot g$ fixes the vertex $F$. It is straightforward to see that $\ker(\cdot)=\{g\in G \mid \cdot g=Id\}$ is the so-called \emph{core of $F$}, $\ker(\cdot)=\operatorname{core}(F)=\cap_{g\in G} F^g$, namely the intersection of all conjugates of $F$ by elements from $G$ (note that there are finitely many of them because $F\leqfi G$ and so, $\operatorname{core}(F)\leqfi G$ again). Since membership of $\operatorname{core}(F)\leqslant G$ is clearly solvable (by checking whether $\cdot g$ moves some vertex or fixes them all), we can compute a finite set of generators for $\operatorname{core}(F)$, as words on $A^{\pm}$ (or, equivalently, on $\{ f_1,\ldots ,f_r\}$); see Proposition~\ref{prop: contr Sch}. Since effective coherence of $F$ implies effective coherence of $\operatorname{core}(F)$, we have completed the proof of the announced reduction (with $L$ being the finite group of permutations $L=\im(\cdot)\leqslant \operatorname{Sym}(V)$).  

Let us reset notation and assume we are given the finite presentation $\langle A \mid R\rangle$ for $G$, generators $f_1,\ldots ,f_r$ (as words on $A^{\pm}$) for a normal finite index subgroup $F\lhd_{f.i.} G$, and a group homomorphism $\pi\colon G\twoheadrightarrow L$ onto a finite group $L$ (where we can compute), satisfying $\ker (\pi)=F$; summarizing, we are given the short exact sequence
 $$
1\to F\to G=\langle A \mid R\rangle \stackrel{\pi}{\to} L\to 1.
 $$
For every $g\in G$, denote $\overline{g}=g\pi\in L$. 

Suppose we are also given several words on $A$, say $h_1,\ldots ,h_s;g$, and our pending goal is to compute a finite set of generators for $I_H(g)$ as normal subgroup of $H*\gen{x}$, where $H=\gen{h_1,\ldots ,h_s}\leqfg G$ (this will show that $G$ is effectively eq-coherent and so, effectively coherent). Since $I_{H\pi}(\overline{g})=\{w(x)\in H\pi*\gen{x} \mid w(\overline{g})=\overline{1}\}$ is the kernel of the homomorphism 
 $$
\begin{array}{rcl} H\pi*\gen{x} & \twoheadrightarrow & \gen{H\pi, \overline{g}}, \\ w(x) & \mapsto & w(\overline{g}), \end{array}
 $$
and since $\gen{H\pi, \overline{g}}\leqslant L$ is finite, we deduce that $I_{H\pi}(\overline{g})\leqfi H\pi*\gen{x}$. Moreover, we can effectively solve the membership problem in $I_{H\pi}(\overline{g})\leqslant H\pi*\gen{x}$ by just plugging $\overline{g}$ into $x$ and computing in $L$. Consider now the epimorphism $\varphi=\pi|_H*id \colon H*\gen{x}\twoheadrightarrow H\pi *\gen{x}$, $w(x)\mapsto \overline{w}(x)$, mapping any $H$-equation $w(x)$ to the corresponding $H\pi$-equation obtained by replacing each coefficient by its image under $\pi$, denoted $\overline{w}(x)$. And consider the full $\varphi$-preimage of $I_{H\pi}(\overline{g})$, namely, 
 $$
\begin{aligned}
\big( I_{H\pi}(\overline{g}) \big)\varphi^{-1} & = \big\{ w(x)\in H*\gen{x} \mid \overline{w}(x)\in I_{H\pi}(\overline{g}) \big\} \\ & = \big\{ w(x)\in H*\gen{x} \mid \overline{w}(\overline{g})=\overline{1} \big\} \\ & =\big\{ w(x)\in H*\gen{x} \mid \overline{w(g)}=\overline{1} \big\} \\ & = \big\{ w(x)\in H*\gen{x} \mid w(g)\in F \big\} \\ & =I_{H}(g;F).
 \end{aligned}
 $$
Since $\varphi$ is onto and $I_{H\pi}(\overline{g})\leqfi H\pi*\gen{x}$, we deduce that $I_{H}(g;F)\leqfi H*\gen{x}$ (in fact, with the same index). Moreover, we can also effectively solve the membership problem in $I_{H}(g;F)\leqslant H*\gen{x}$: just map the given equation down to $H\pi*\gen{x}$ and check whether it belongs to $I_{H\pi}(\overline{g})$. Therefore, by Proposition~\ref{prop: contr Sch}, we can compute a finite set of generators for $I_{H}(g;F)$, say $I_{H}(g;F)=\gen{w_1(x),\ldots ,w_p(x)}$. These are $H$-equations $w_i(x)$ such that $w_i(g)\in F$ (note that $H\cap F\leqslant I_H(g)$, viewed as constant equations). 

Our target is inside this subgroup, $I_H(g)\leqslant I_H(g;F)\leqfi H*\gen{x}$. Compute the elements $v_i=w_i(g)\in F$, for $i=1,\ldots ,p$. More precisely: during the last application of Proposition~\ref{prop: contr Sch}, we have computed $\schreier{I_H(g;F),\, H*\gen{x},\, \{h_1,\ldots ,h_s,x\}}$ and have obtained $w_1(x),\ldots ,w_p(x)\in H*\gen{x}$ as words on $(\{h_1,\ldots ,h_s,x\})^{\pm}$, which then can be recomputed as words on $(A\cup\{x\})^{\pm}$; plug $g$ (a word on $A^{\pm}$) into $x$ and rewrite the results $v_1=w_1(g),\ldots ,v_p=w_p(g)$ (which are words on $A^{\pm}$) as words on $\{f_1,\ldots ,f_r\}$ (this can be done by a standard brute force algorithm, knowing in advance that $v_1,\ldots ,v_p\in F$). Now it is time to apply the hypothesis on $F$: compute a presentation for $V=\gen{v_1,\ldots ,v_p}\leqslant F$ with respect to the given generators, say $V\simeq \langle x_1,\ldots ,x_p \mid s_1,\ldots ,s_q \rangle$. This means that $s_i=s_i(x_1,\ldots ,x_p)$ are abstract words such that $s_i(v_1,\ldots ,v_p)=1$, and that any word $s(x_1,\ldots ,x_p)$ satisfying $s(v_1,\ldots ,v_p)=1$ equals a product of conjugates of them, 
 $$
s(x_1,\ldots ,x_p)=\prod_{j=1}^n z_j (x_1,\ldots ,x_p)^{-1} \big( s_{i_j}(x_1,\ldots ,x_p)\big)^{\epsilon_{j}}z_j (x_1,\ldots ,x_p).
 $$
Then, we have that 
 $$
\begin{aligned}
I_H(g) & =\{w(x)\in H*\gen{x} \mid w(g)=1 \} \\ & =\{w(x)\in I_H(g;F) \mid w(g)=1\} \\ & =\Big\{ \prod_{j=1}^n w_{i_j}(x)^{\epsilon_j} \mid \prod_{j=1}^n w_{i_j}(g)^{\epsilon_j}=1 \Big\} \\ & =\Big\{ \prod_{j=1}^n w_{i_j}(x)^{\epsilon_j} \mid \prod_{j=1}^n v_{i_j}^{\epsilon_j}=1 \Big\} \\ & =\gen{\gen{s_1 \big( w_1(x), \ldots ,w_p(x)\big),\ldots ,s_q \big( w_1(x), \ldots ,w_p(x)\big) }} \lhd H*\gen{x},
\end{aligned}
 $$
where the justification for the last equality is as follows: the inclusion to the left is clear because $s_i \big( w_1(g), \ldots ,w_p(g) \big) =s_i(v_1, \ldots ,v_p)=1$ and so, $s_i \big( w_1(x), \ldots ,w_p(x) \big) \in I_H(g)$, for $i=1,\ldots ,q$. For the other inclusion, consider a product of the form $w(x)=\prod_{j=1}^n w_{i_j}(x)^{\epsilon_j}$ such that $\prod_{j=1}^n v_{i_j}^{\epsilon_j}=1$. Since the formal product $s(x_1,\ldots ,x_p)=\prod_{j=1}^n x_{i_j}^{\epsilon_j}$ satisfies $s(v_1,\ldots ,v_p)=\prod_{j=1}^n v_{i_j}^{\epsilon_j}=1$, the previous paragraph tells us that it must be of the form
 $$
s(x_1,\ldots ,x_p)=\prod_{j=1}^n z_j (x_1,\ldots ,x_p)^{-1} \big( s_{i_j}(x_1,\ldots ,x_p)\big)^{\epsilon_{j}}z_j (x_1,\ldots ,x_p).
 $$
Replacing $x_i$ by $w_i(x)$, $i=1,\ldots ,p$, we obtain
 \begin{align*}
w(x) & =s\big( w_1(x),\ldots ,w_p(x) \big) = \\ & =\prod_{j=1}^n z_j \big( w_1(x),\ldots ,w_p(x) \big)^{-1} \big( s_{i_j}\big( w_1(x),\ldots ,w_p(x)\big) \big)^{\epsilon_{j}}z_j \big( w_1(x),\ldots ,w_p(x)\big),
 \end{align*}
which means that 
 $$
w(x)\in \gen{\gen{s_1 \big( w_1(x), \ldots ,w_p(x)\big),\ldots ,s_q \big( w_1(x), \ldots ,w_p(x)\big)}}\lhd H*\gen{x},
 $$
as we wanted to see. This completes the proof. 
\end{proof}

\section{The case of integral matrices}\label{matrius}

Let us apply the previous results to the usual groups of integral 2-by-2 matrices. A lot is known about the structure of groups like $\operatorname{GL}_2(\Z)$, $\operatorname{PGL}_2(\Z)$, $\operatorname{SL}_2(\Z)$, or $\operatorname{PSL}_2(\Z)$. They are all virtually-free with well known normal free subgroups whose corresponding quotients are also well known finite groups; see, for example, \cite[Ch.~1, Exs.~5.2]{DD89} for the structure and specific sets of generators. We note that virtual freeness is a very specific property for matrices of size 2: each of these integral matrix groups in size 3 and bigger are very far from being virtually free. 

For virtually free groups, many algorithmic properties are known to be solvable: the word, conjugacy and isomorphism problems, the double-subgroup conjugacy problem, the order and relative order problems, the membership problem, the intersection problem, etc (all of them in their standard forms, where the group is given by generators and relations). Of course, all these solutions apply also to the above mentioned 2-by-2 integral matrix groups. If one makes the appropriate translation from the formalism of presentations to the plain language of integral matrices (namely, two-by-two arrays of integral numbers), one obtains algorithms working with real matrices. Most of these algorithms perform tasks which are not easy looking from the plain matrix point of view. As a very illustrative example, we can mention the well-known membership problem:

\begin{thm}
Let $G$ be one of the groups $\operatorname{GL}_2(\Z)$, $\operatorname{PGL}_2(\Z)$, $\operatorname{SL}_2(\Z)$, or $\operatorname{PSL}_2(\Z)$. There exists an algorithm which, given matrices $h_1,\ldots ,h_s;g$ from $G$, decides whether $g$ can be expressed as a product of the form $g=h_{i_1}^{\epsilon_1}\cdots h_{i_n}^{\epsilon_n}$, for some $n\geq 0$, $i_1,\ldots ,i_n\in \{1,\ldots ,s\}$, and $\epsilon_1,\ldots ,\epsilon_n=\pm 1$; furthermore, in the affirmative case, the algorithm outputs such an expression. \qed
\end{thm}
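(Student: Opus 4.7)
The approach is to exploit the virtual freeness of each of the four groups $G \in \{\operatorname{GL}_2(\Z), \operatorname{PGL}_2(\Z), \operatorname{SL}_2(\Z), \operatorname{PSL}_2(\Z)\}$. For each of them there is a well known short exact sequence
$$ 1 \to F \to G \stackrel{\pi}{\to} L \to 1, $$
with $F$ free of finite rank and $L$ finite (see~\cite[Ch.~1, Exs.~5.2]{DD89}); moreover, $G$ comes with an explicit finite generating set $A$ and a classical Euclidean-style procedure that rewrites any $2$-by-$2$ integral matrix of $G$ as a word on $A^{\pm}$. Using these, the plan is to reduce the membership question $g \stackrel{?}{\in} H = \gen{h_1,\ldots,h_s}$ to a membership question in the free group $F$, where Stallings' classical algorithm provides both decidability and a constructive output.

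First I would rewrite the input matrices $h_1,\ldots,h_s,g$ as words on $A^{\pm}$ and project to $L$ via $\pi$. Since $L$ is finite and known, enumerating the finite subgroup $H\pi \leqslant L$ decides whether $g\pi \in H\pi$: if not, output \texttt{No}; if yes, exhaustive search in $L$ produces a word $u$ on $h_1,\ldots,h_s$ with $u\pi = g\pi$, so that $u^{-1}g \in F$. The original question is then equivalent to deciding whether $u^{-1}g$ lies in $H \cap F$.

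Next I would apply Proposition~\ref{prop: contr Sch} inside the ambient group $H$: since $[G : F] < \infty$ forces $[H : H \cap F] < \infty$, and membership in $F$ (hence in $H \cap F \leqslant H$) is decidable via $\pi$, the proposition computes the Schreier graph $\schreier{H \cap F, H, \{h_1,\ldots,h_s\}}$ and outputs generators of $H \cap F$ expressed as words in the $h_i$'s. Because $F$ is free, I can then build the Stallings automaton of $H \cap F$ with respect to a free basis of $F$ and read the word $u^{-1}g$ in it: this both decides membership in $H \cap F$ and, in the affirmative case, factors $u^{-1}g$ over the chosen generators of $H \cap F$, which unfold to a word in $h_1,\ldots,h_s$. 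Prepending $u$ yields the desired expression $g = h_{i_1}^{\epsilon_1} \cdots h_{i_n}^{\epsilon_n}$.

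The conceptual content, namely virtual freeness of $G$ together with constructive membership in free groups, is standard; the main obstacle is bookkeeping. One must carry an explicit factorization in the original generators $h_1,\ldots,h_s$ through every reduction (matrix-to-word rewriting over $A$, lifting from $L$ back to $G$, expressing generators of $H \cap F$ in the $h_i$'s, and reading Stallings foldings in $F$), so that the output is not merely a \texttt{Yes}/\texttt{No} answer but a concrete product expression for $g$. All of these bookkeeping steps are routine given the machinery developed in Section~\ref{finite-index}, so the proof reduces to a careful assembly rather than any new idea.
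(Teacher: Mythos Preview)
The paper does not actually prove this theorem: note the \qed\ immediately after the statement. The authors cite it as ``very well known for free and virtually-free groups'' and point to \cite{L21} and \cite{CT24} for modern matrix-oriented treatments with complexity analysis. So there is no in-paper proof to compare your proposal against.

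Your proposal is correct and is precisely the kind of argument the authors have in mind when they say the result is standard for virtually free groups: reduce modulo a finite-index normal free subgroup $F$, handle the finite quotient $L$ by brute force, and then solve the residual membership problem inside the free group $F$ via Stallings automata, threading an explicit word in the $h_i$'s through each step. One small technical point: you invoke Proposition~\ref{prop: contr Sch} with ambient group $H$, but that proposition is stated for a group \emph{given by a finite presentation}, and you do not yet have one for $H$. This is not a real obstacle, since the proof of Proposition~\ref{prop: contr Sch} uses only the generating set and the membership oracle to build the Schreier graph and extract generators (the relations enter only in the final Reidemeister--Schreier step, which you do not need); but you should say so explicitly rather than cite the proposition as a black box. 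With that clarification, your argument is complete.
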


This result is very well known for free and virtually-free groups. Modern adaptations to the matrix context can be found, for example, in~\cite{L21} and~\cite{CT24}, including detailed analysis of the complexity of the provided algorithms. The goal of this section is to present another of such applications, illustrating the effective eq-coherence of these matrix groups, a direct corollary of our Theorem~\ref{thm: main}(ii) above, fully translated in terms of plain integral matrices. 

\begin{cor}\label{cor: main matrius}
Let $G$ be one of $\operatorname{GL}_2(\Z)$, $\operatorname{PGL}_2(\Z)$, $\operatorname{SL}_2(\Z)$, or $\operatorname{PSL}_2(\Z)$. There is an algorithm which, given matrices $h_1,\ldots ,h_s;g\in G$, decides whether $g$ is algebraic over the subgroup $H=\gen{h_1,\ldots ,h_s}\leqslant G$; moreover, in the affirmative case, the algorithm computes finitely many $H$-equations $w_1(x),\ldots ,w_p(x)\in I_H(g)\lhd H*\gen{x}$ further satisfying that any $w(x)\in H*\gen{x}$ with $w(g)=1$ is a product of conjugates of $w_1(x),\ldots ,w_p(x)$. \qed
\end{cor}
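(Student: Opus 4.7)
The plan is to obtain Corollary \ref{cor: main matrius} as a direct consequence of Theorem~\ref{thm: main}(ii), using that each of the four groups listed is virtually free, combined with the effective eq-coherence of free groups and the equivalences in Proposition~\ref{prop:equivalence}. So the whole proof is essentially a reduction to results already established in the paper.

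First, for each ambient group $G\in \{\operatorname{GL}_2(\Z),\operatorname{PGL}_2(\Z),\operatorname{SL}_2(\Z),\operatorname{PSL}_2(\Z)\}$, I would fix once and for all a standard finite presentation and single out an explicit finite-index free normal subgroup $F\lhd_{f.i.} G$, given by a finite set of words on the chosen generators of $G$; the structure of these groups as virtually free groups is classical and the corresponding data (generators, relations, decompositions, explicit free $F$) is available in references like \cite[Ch.~1, Exs.~5.2]{DD89}. This puts us precisely in the framework of the second (effective) part of Theorem~\ref{thm: main}: we have $G=\langle A\mid R\rangle$ finite, and $F\leqfi G$ given by finitely many generators as words on $A^{\pm}$ (equivalently, by Proposition~\ref{prop: TC}, by an algorithm for membership of $F$ in $G$).

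Next, being finitely generated and free, $F$ is effectively coherent via the Stallings graph / Nielsen-transformations algorithm recalled in Example~\ref{ex-Nielsen}, and therefore also effectively eq-coherent by Proposition~\ref{prop:equivalence}(ii). Applying Theorem~\ref{thm: main}(ii) to the extension $F\leqfi G$, we conclude that $G$ itself is effectively coherent, and invoking Proposition~\ref{prop:equivalence}(ii) once more, $G$ is effectively eq-coherent. Unfolding the definition of the latter, we obtain the announced algorithm: on input $h_1,\ldots,h_s;g\in G$ it outputs finitely many $H$-equations $w_1(x),\ldots,w_p(x)\in H*\gen{x}$ generating $I_H(g)$ as a normal subgroup, where $H=\gen{h_1,\ldots,h_s}$. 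Deciding algebraicity of $g$ over $H$ is then immediate from the corollary after Proposition~\ref{OP}: $g$ is $H$-transcendental if and only if all the output equations are trivial, which is checkable using the word problem in $G$ (solvable since $G$ is virtually free).

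The only real point to watch is the bookkeeping of the two languages involved: the algorithm of Theorem~\ref{thm: main}(ii) takes and produces words on the generators of $G$, while the statement of the corollary is phrased directly in terms of integral matrices. So I would explicitly fix a dictionary between integral matrices in $G$ and words on the chosen generators (the algorithms of \cite{L21,CT24} provide effective translations in both directions), so that the matrix inputs $h_1,\ldots,h_s;g$ can be fed to the algorithm of Theorem~\ref{thm: main}(ii), and the resulting equations $w_i(x)\in H*\gen{x}$ can be rewritten with coefficients spelled as explicit matrices in $G$. Beyond that routine translation, the result is immediate from the machinery already assembled in Sections~\ref{coherencia} and~\ref{finite-index}.
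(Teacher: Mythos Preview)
Your proposal is correct and follows essentially the same route as the paper: the corollary is stated with a \qed and no separate proof, being a direct consequence of Theorem~\ref{thm: main}(ii) applied to the well-known virtually-free structure of these matrix groups (with the free subgroup effectively coherent by Example~\ref{ex-Nielsen}), together with Proposition~\ref{prop:equivalence}(ii) and the corollary after Proposition~\ref{OP}. Your added remarks about the matrix--word dictionary are exactly the ``appropriate translation'' the paper alludes to in the paragraph preceding the corollary.
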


Let us concentrate on the group $G=\operatorname{PSL}_2(\Z)=\operatorname{SL}_2(\Z)/\{\pm I\}$, namely 2-by-2 integral matrices with determinant 1, modulo its center; here, $I=\left( \begin{smallmatrix} 1 & 0 \\ 0 & 1 \end{smallmatrix} \right)$ stands for the 2-by-2 identity matrix. We make the usual abuse of language of working with real matrices to represent elements of $\operatorname{PSL}_2(\Z)$ (which really are equivalence classes of matrices modulo $\pm I$); hence, $-I=I$ in $\operatorname{PSL}_2(\Z)$. This group is isomorphic to the free product of the cyclic groups of orders 2 and 3, 
 $$
\operatorname{PSL}_2(\Z)\simeq C_2*C_3=\langle a,b \mid a^2, b^3\rangle,
 $$
and we can take as generators the matrices $a=\left( \begin{smallmatrix} 0 & -1 \\ 1 & 0 \end{smallmatrix} \right)$ and $b=\left( \begin{smallmatrix} 1 & -1 \\ 1 & 0 \end{smallmatrix} \right)$; see, for example, \cite[Ch.~1, Exs.~5.2]{DD89}. Clearly, $a^2=b^3=-I=I$. 

Abelianizing, we obtain $(\operatorname{PSL}_2(\Z))^{\rm ab}\simeq C_2\times C_3\simeq C_6$ and the abelianization map sends $a$ to $(1,0)$ and $b$ to $(0,1)$. More precisely, we have the short exact sequence
 \begin{equation}\label{eq: ses}
\begin{array}{ccccccccc}
1 & \longrightarrow & F & \longrightarrow & \operatorname{PSL}_2(\Z) & \stackrel{\pi}{\longrightarrow} & C_2\times C_3 & \longrightarrow & 1. \\ & & & & a & \mapsto & (1,0) & & \\ & & & & b & \mapsto & (0,1) & & 
\end{array}
 \end{equation}
By normality of $F=\ker \pi=[\operatorname{PSL}_2(\Z), \operatorname{PSL}_2(\Z)]$ in $\operatorname{PSL}_2(\Z)$, the Schreier graph of $F$ coincides with the Cayley graph of the quotient, 
 $$
\Schreier{F,\operatorname{PSL}_2(\Z), \{a,b\}} =\Cayley{\operatorname{PSL}_2(\Z)/F, \{a\pi, b\pi\}}= \Cayley{C_2\times C_3, \{(1,0), (0,1)\}}.
 $$
With elementary calculations in $C_2\times C_3$, we can depict $\cayley{C_2\times C_3, \{(1,0), (0,1)\}}$ and, using the set of coset representatives $\{I, b, b^2, a, ab, ab^2\}$, we get the (isomorphic) $\Schreier{F,\operatorname{PSL}_2(\Z), \{a,b\}}$ depicted in Figure~\ref{fig: sch F}.

\begin{figure}[h] 
\centering
  \begin{tikzpicture}[>=stealth, auto, node distance=1cm]
  
   \node[] (Fa) {$F a$};
   \node[] (Fab) [above left = 0.5 and 1 of Fa] {$Fab$};
   \node[] (Fabb) [above right = 0.5 and 1 of Fa] {$Fab^2$};
   \node[] (F) [below = of Fa] {$F$};
   \node[] (Fb) [below left = 0.5 and 1.175 of F] {$Fb$};
   \node[] (Fbb) [below right = 0.5 and 1.175 of F]{$Fb^2$};

   \path[<->] (Fa) edge[blue]
             node[right] {\scriptsize{$a$}}
             (F);

    \path[->] (Fa) edge[red]
             node[above right] {\scriptsize{$b$}}
             (Fab);

    \path[->] (Fabb) edge[red]
             node[above left] {\scriptsize{$b$}}
             (Fa);

    \path[->] (F) edge[red]
             node[above left] {\scriptsize{$b$}}
             (Fb);

    \path[->] (Fbb) edge[red]
             node[above right] {\scriptsize{$b$}}
             (F);

    \path[->] (Fab) edge[red]
             node[above] {\scriptsize{$b$}}
             (Fabb);

    \path[->] (Fb) edge[red]
             node[above] {\scriptsize{$b$}}
             (Fbb);

    \path[<->] (Fab) edge[blue]
             node[right] {\scriptsize{$a$}}
             (Fb);

    \path[<->] (Fabb) edge[blue]
             node[right] {\scriptsize{$a$}}
             (Fbb);
   
\end{tikzpicture}
\vspace{-5pt}
\caption{Schreier graph $\Schreier{F,\operatorname{PSL}_2(\Z), \{a,b\}}$} \label{fig: sch F}
\end{figure}

Now, we shall see that $F$ is a free group of rank two, freely generated by the elements 
 $$
p=[a,b]=ab^2ab=\left( \begin{smallmatrix} 2 & -1 \\ -1 & 1 \end{smallmatrix} \right) \qquad \text{and} \qquad q=[b^2,a]=bab^2a=\left( \begin{smallmatrix} 2 & 1 \\ 1 & 1 \end{smallmatrix} \right).
 $$
Clearly, $M=\gen{p,q}\leqslant F\leqslant_6 \operatorname{PSL}_2(\Z)$. So, the cosets of the six representatives considered above but taken modulo $M$, namely $M$, $Mb$, $Mb^2$, $Ma$, $Mab$, and $Mab^2$, are also different from each other. Note that this list of cosets modulo $M$ is possibly not complete (correponding to the fact that $M$ is a subgroup of $F$, possibly proper); in other words, $\Schreier{M,\operatorname{PSL}_2(\Z), \{a,b\}}$ contains these six vertices, but possibly more. However, if we look carefully at each edge of $\Schreier{F,\operatorname{PSL}_2(\Z), \{a,b\}}$, we see that all of them are also correct edges in the graph $\Schreier{M,\operatorname{PSL}_2(\Z), \{a,b\}}$: the seven coset equalities $M\cdot a=Ma$, $M\cdot b=Mb$, $Mb\cdot b=Mb^2$, $Mb^2\cdot b=M$, $Ma\cdot b=Mab$, $Mab\cdot b=Mab^2$, and $Mab^2\cdot b=Ma$ are clear, while the other two, $Mb\cdot a=Mab$ and $Mb^2\cdot a=Mab^2$ are also correct because $bab^{-1}a^{-1}=q\in M$ and $b^2ab^{-2}a^{-1}=p^{-1}\in M$, respectively. This means that the graph depicted in Figure~\ref{fig: sch F} coincides also with $\Schreier{M,\operatorname{PSL}_2(\Z), \{a,b\}}$; therefore, $\gen{p,q}=M=F$. 

It remains to see that $\{p, q\}$ freely generate $F$. This follows, for example, from an application of the Kurosh Subgroup Theorem (see~\cite[Ch.~1, Thm.~7.8]{DD89}): as all subgroups of free products, $F$ must be a free product of a free group and certain conjugates of subgroups of $C_2=\gen{a}$ and $C_3=\gen{b}$, i.e., a free product of a free group and certain conjugates of $C_2=\gen{a}$ and $C_3=\gen{b}$. But $F$ is a normal subgroup of $\operatorname{PSL}_2(\Z)$ which contains neither $a$ nor $b$; hence, it cannot contain any conjugate of them, either. We deduce that $F$ must be a free group. Finally, since $\{p,q\}$ generate $F$, and $F$ is not cyclic ($pq=\left( \begin{smallmatrix} 3 & 1 \\ -1 & 0 \end{smallmatrix}\right) \neq \left( \begin{smallmatrix} 3 & -1 \\ 1 & 0 \end{smallmatrix}\right) =qp$), we deduce that $\{p,q\}$ is, in fact, a free basis for $F\lhd_6 \operatorname{PSL}_2(\Z)$.

Finally, let us exemplify Corollary~\ref{cor: main matrius} by using the short exact sequence~\eqref{eq: ses} with finite quotient, and free kernel freely generated by $\{p, q\}$ (this is a particular materialization of the virtual freeness of $\operatorname{PSL}_2(\Z)$). We develop the following two particular examples.

\begin{ex}
Suppose we are given the matrices $h_1=\left( \begin{smallmatrix} 2 & -1 \\ -1 & 1 \end{smallmatrix}\right)$, $h_2=\left( \begin{smallmatrix} 2 & -5 \\ 1 & -2 \end{smallmatrix}\right)$, and $g=\left( \begin{smallmatrix} 5 & 3 \\ 3 & 2 \end{smallmatrix}\right)$ from $\operatorname{PSL}_2(\Z)$. Let us decide whether $g$ is algebraic or transcendental over $H=\gen{h_1, h_2}\leqslant \operatorname{PSL}_2(\Z)$. 

Writing them in terms of $a,b$ we get $g=bab^2abab^2a$, $h_1=ab^2ab$, and $h_2=babab^2ab^2$ (these expressions can always be found by brute force, knowing in advance that $a,b$ generate $\operatorname{PSL}_2(\Z)$). And, mapping down to $C_2\times C_3$, we get $\overline{h_1}=(0,0)$ and $\overline{h_2}=(1,0)$; so, $\overline{H}=\gen{(1,0)}$. Since $\overline{g}=(0,0)$, 
 $$
I_{\overline{H}}(\overline{g})=\ker \Big( \overline{H}*\gen{x} \twoheadrightarrow \gen{\overline{H}, \overline{g}}=\overline{H}\Big) \leqslant_2 \overline{H}*\gen{x}
 $$
and so, $I_{H}(g;F)=\big(I_{\overline{H}}(\overline{g})\big)\varphi^{-1}\leqslant_2 H*\gen{x}$, where 
 \begin{equation}\label{eeq:phi}
\begin{array}{rcl} \varphi=\pi|_H*id \colon H*\gen{x} & \twoheadrightarrow & \overline{H}*\gen{x}. \\ w(x) & \mapsto & \overline{w}(x) \end{array}
 \end{equation}
Following the algorithm from the proof of Proposition~\ref{prop: contr Sch}, let us construct the Schreier graph $\Schreier{I_H(g;F),\, H*\gen{x},\, \{h_1, h_2, x\}}$ (note that membership in $I_H(g;F)\leqslant_2 H*\gen{x}$ is easily solvable by plugging $g$ into $x$, mapping down to $C_2\times C_3$ and checking whether the result equals $(0,0)$). This graph has just two vertices, which are $I_H(g;F)$ and $I_H(g;F)h_2$, since $\overline{h_2}=(1,0)\neq (0,0)$. After the following easy calculations, 
 $$
\begin{array}{lcl}
I_H(g;F)\cdot h_1 =I_H(g;F) & \text{since} & \overline{h_1}=(0,0), \\
I_H(g;F)\cdot x =I_H(g;F) & \text{since} & \overline{g}=(0,0), \\
I_H(g;F)h_2\cdot h_1 =I_H(g;F)h_2 & \text{since} & \overline{h_2h_1h_2^{-1}}=(0,0), \\
I_H(g;F)h_2\cdot x =I_H(g;F)h_2 & \text{since} & \overline{h_2gh_2^{-1}}=(0,0), \\
I_H(g;F)h_2\cdot h_2 =I_H(g;F) & \text{since} & \overline{h_2}^{\, 2}=(0,0), 
\end{array}
 $$
we obtain the graph $\Schreier{I_H(g;F),\, H*\gen{x},\, \{h_1, h_2, x\}}$, depicted in Figure~\ref{fig: sch eqs}.

%\begin{figure}
%\centering
%\includegraphics[width=2.5in]{DIBUIX2.png}
%\caption{Schreier graph $\Schreier{I_H(g;F),\, H*\gen{x},\, \{h_1, h_2, x\}}$} \label{fig: sch eqs}
%\end{figure}

\begin{figure}[h]
\centering
  \begin{tikzpicture}[>=stealth, auto, node distance=1cm]
  %\begin{scope}
  
   \node[] (A) {$_{I_H(g;F)}$};
   \node[] (B) [right = of A] {$_{I_H(g;F)h_2}$};

   \path[->] (A) edge[line width=1, bend left] node[pos=0.5, above] {$h_2$} (B);

   \path[->] (B) edge[bend left] node[pos=0.5, below] {$h_2$} (A);
   
   \path[->] (A) edge[out=80, in=55, loop,looseness=11] node[pos=0.5, above] {$x$} (A);

   \path[->] (A) edge[out=125, in=100, loop,looseness=11] node[pos=0.5, above] {$h_1$} (A);

   \path[->] (B) edge[out=80, in=55, loop, looseness=11] node[pos=0.5, above] {$x$} (B);

   \path[->] (B) edge[out=125, in=100, loop,looseness=11] node[pos=0.5, above] {$h_1$} (B);

\end{tikzpicture}
\vspace{-5pt}
\caption{Schreier graph $\Schreier{I_H(g;F),\, H*\gen{x},\, \{h_1, h_2, x\}}$} \label{fig: sch eqs}
\end{figure}

Then, taking as maximal tree the two vertices together with the edge going to the right (boldfaced in the  figure), we get the following set of five $H$-equations generating $I_H(g;F)\leqslant H*\gen{x}$:
 $$
I_H(g;F)=\big\langle w_1(x),\, w_2(x),\, w_3(x),\, w_4(x),\, w_5(x) \big\rangle,
 $$
where $w_1(x)=h_1$, $w_2(x)=x$, $w_3(x)=h_2^2=_H 1$, $w_4(x)=h_2h_1h_2^{-1}$, and $w_5(x)=h_2xh_2^{-1}$. Note that $w_3(x)$ is the trivial equation (so, we can ignore it) and that $w_1(x), w_4(x)\in H$ are constant non-trivial equations. The next step is to evaluate the remaining four equations on $g$, and to rewrite the result in terms of the free generators $\{p,q\}$:
 $$
\begin{array}{l}
v_1=w_1(g)=h_1=p, \\
v_2=w_2(g)=g=q^2, \\
v_4=w_4(g)=h_2h_1h_2^{-1}=qpqp^{-1}q^{-1}p^{-1}q^{-1}, \\
v_5=w_5(g)=h_2gh_2^{-1}=qpq^{-2}p^{-1}q^{-1}.
\end{array}
 $$

Now it is time to apply effective coherence of free groups, and to find a presentation for the subgroup $V=\gen{v_1, v_2, v_4, v_5}=\gen{p, q^2, qpqp^{-1}q^{-1}p^{-1}q^{-1}, qpq^{-2}p^{-1}q^{-1}}\leqslant F=F(\{p,q\})$, with respect to the shown generators. 
To this purpose, we apply the adapted version~\cite[Thm.~5.8]{DV24} of Rosenmann--Ventura theorem~\cite{RV24} mentioned above. The Stallings graph of $V$ (with respect to the ambient free basis $\{p,q\}$) is depicted in Figure~\ref{fig: stallings1}. 

%\begin{figure}
%\centering
%\includegraphics[width=2in]{DIBUIX3.png}
%\caption{Stallings graph of $V$} \label{fig: stallings1}
%\end{figure}

\begin{figure}[h] 
\centering
  \begin{tikzpicture}[>=stealth, auto, node distance=1cm]
  %\begin{scope}
  
   \node[state,accepting] (A) {};
   
   \node[state] (B)  [right = of A] {};

   \node[state] (C) [right = of B]{};

   \node[state] (D) [right = of C]{};

   \path[->] (A) edge[out=140, in=100, loop] node[pos=0.5, above] {\scriptsize{$p$}} (A);
   
   \path[->] (A) edge[bend left] node[pos=0.5, above] {\scriptsize{$q$}} (B);

   \path[->] (B) edge[bend left] node[pos=0.5, below] {\scriptsize{$q$}} (A);

   \path[->] (B) edge node[above] {\scriptsize{$p$}} (C);

   \path[->] (C) edge[bend left] node[pos=0.5, above] {\scriptsize{$q$}} (D);

   \path[->] (D) edge[bend left] node[pos=0.5, below] {\scriptsize{$q$}} (C);

   \path[->] (D) edge[out=80, in=40, loop] node[pos=0.5, above] {\scriptsize{$p$}} (D);

  \end{tikzpicture}
  \vspace{-5pt}

  \caption{Stallings graph of $V$} \label{fig: stallings1}
\end{figure}
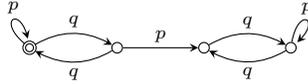

Since its rank equals $1-4+7=4$, we deduce that $\operatorname{rk}(V)=4$ and that $\{v_1, v_2, v_4, v_5\}$ are freely independent. This means that they satisfy no nontrivial relation. We conclude that $I_H(g)=\{1\}$ and the element $g$ is transcendental over $H$.  

We remind the meaning of this conclusion: for $g=\left( \begin{smallmatrix} 5 & 3 \\ 3 & 2 \end{smallmatrix}\right)$, there is no possible election of $n+1\geq 2$ matrices $k_0, k_1,\ldots ,k_n\in H=\langle \left( \begin{smallmatrix} 2 & -1 \\ -1 & 1 \end{smallmatrix}\right),\, \left( \begin{smallmatrix} 2 & -5 \\ 1 & -2 \end{smallmatrix}\right)\rangle\leqslant \operatorname{PSL}_2(\Z)$, $k_1,\ldots ,k_{n-1}\neq I$, and $n$ signums $\epsilon_1,\ldots ,\epsilon_n=\pm 1$, such that $k_0 g^{\epsilon_1} k_1 g^{\epsilon_2} \cdots g^{\epsilon_n} k_n=I$. This is not at all obvious if we use only basic matrix and linear algebra techniques. Observe that this conclusion does not mean that $\{h_1, h_2, g\}$ are freely independent; in fact, they are not since $H$ is not even a free group: $h_2$ is a torsion element, $h_2^2=-I=I$. \qed
\end{ex}

\begin{ex}
Let us develop a similar example with the given matrices $h_1=\left( \begin{smallmatrix} 2 & -1 \\ -1 & 1 \end{smallmatrix}\right)$, $h_2=\left( \begin{smallmatrix} 2 & -5 \\ 1 & -2 \end{smallmatrix}\right)$, and $g=\left( \begin{smallmatrix} 1 & 0 \\ -2 & 1 \end{smallmatrix}\right)$ from $\operatorname{PSL}_2(\Z)$. Again, we have to decide whether $g$ is algebraic or transcendental over $H=\gen{h_1, h_2}$. 

The subgroup is the same as in the previous example, $H=\gen{h_1, h_2}$ with 
$h_1=ab^2ab$, and $h_2=babab^2ab^2$, while now $g=abab$. Mapping down to $C_2\times C_3$, we get $\overline{H}=\gen{(1,0)}$ and $\overline{g}=(0,2)$, so now $\gen{\overline{H},\overline{g}}=C_2\times C_3$. In this situation, 
 $$
I_{\overline{H}}(\overline{g})=\ker \Big( \overline{H}*\gen{x} \twoheadrightarrow \gen{\overline{H}, \overline{g}}=C_2\times C_3\Big) \leqslant_6 \overline{H}*\gen{x}
 $$
and $I_{H}(g;F)=\big(I_{\overline{H}}(\overline{g})\big) \varphi^{-1} \leqslant_6 H*\gen{x}$ (see~\eqref{eeq:phi} for the definition of $\varphi$).

Following the algorithm from the proof of Proposition~\ref{prop: contr Sch}, we get the Schreier graph $\Schreier{I_H(g;F),\, H*\gen{x},\, \{h_1, h_2, x\}}$, depicted in Figure~\ref{fig: sch eqs2} (detailed calculations are left to the reader).

%\begin{figure}
%\centering
%\includegraphics[width=2.5in]{DIBUIX4.png}
%\caption{Schreier graph $\Schreier{I_H(g;F),\, H*\gen{x},\, \{h_1, h_2, x\}}$} \label{fig: sch eqs2}
% \end{figure}

\begin{figure}[H]
\centering
\vspace{-80pt}
  \begin{tikzpicture}[>=stealth, auto, node distance=1.5cm]
  %\begin{scope}
   \node[] (gF) {$_{I_H(g;F)}$};
   
   \node[] (gFh2) [right = of gF] {$_{I_H(g;F)h_2}$};

   \node[] (gFx-1) [above = of gF] {$_{I_H(g;F)x^{-1}}$};
   
   \node[] (gFh2x-1) [above = of gFh2] {$_{I_H(g;F)h_2x^{-1}}$};
   
   \node[] (gFx) [below = of gF] {$_{I_H(g;F)x}$};
   
   \node[] (gFh2x) [below = of gFh2] {$_{I_H(g;F)h_2x}$};

   \path[->] (gF) edge[line width=1, bend left] node[pos=0.5, above] {$h_2$} (gFh2);

   \path[->] (gFh2) edge[bend left] node[pos=0.5, below] {$h_2$} (gF);

   \path[->] (gFx-1) edge[bend left] node[pos=0.5, above] {$h_2$} (gFh2x-1);

   \path[->] (gFh2x-1) edge[bend left] node[pos=0.5, below] {$h_2$} (gFx-1);

   \path[->] (gFx) edge[bend left] node[pos=0.5, above] {$h_2$} (gFh2x);

   \path[->] (gFh2x) edge[bend left] node[pos=0.5, below] {$h_2$} (gFx);

   \path[->] (gFx-1) edge[line width=1] node[pos=0.5, left] {$x$} (gF);

   \path[->] (gF) edge[line width=1] node[pos=0.5, left] {$x$} (gFx);

   \path[->] (gFh2x-1) edge[line width=1] node[pos=0.5, left] {$x$} (gFh2);

   \path[->] (gFh2) edge[line width=1] node[pos=0.5, left] {$x$} (gFh2x);

   \path[->] (gFh2x-1) edge[loop right, looseness=6] node[pos=0.5, right] {$h_1$} (gFh2x-1);

   \path[->] (gFh2) edge[loop right, looseness=9] node[pos=0.5, right] {$h_1$} (gFh2);

   \path[->] (gFh2x) edge[loop right, looseness=7.5] node[pos=0.5, right] {$h_1$} (gFh2x);

   \path[->] (gFx-1) edge[loop left, looseness=7] node[pos=0.5, left] {$h_1$} (gFx-1);

   \path[->] (gF) edge[loop left, looseness=12] node[pos=0.5, left] {$h_1$} (gF);

   \path[->] (gFx) edge[loop left, looseness=10] node[pos=0.5, left] {$h_1$} (gFx);

   \path[->] (gFx) edge[bend left, out=140, in=40, looseness=3] node[pos=0.5, left] {$x$} (gFx-1);

   \path[<-] (gFh2x-1) edge[bend right, out=140, in=40, looseness=3] node[pos=0.5, right] {$x$} (gFh2x);
   \end{tikzpicture}
   \vspace{-100pt}
   \caption{Schreier graph $\Schreier{I_H(g;F),\, H*\gen{x},\, \{h_1, h_2, x\}}$} \label{fig: sch eqs2}
\end{figure}

In this case, we have $[H*\gen{x} : I_H(g;F)]=6$ vertices and $6\cdot |\{h_1, h_2, x\}|=18$ edges and so, rank $1-6+18=13$. Choosing the maximal tree indicated by the boldfaced edges, we get the following 13 generators for $I_H(g;F)\leqslant H*\gen{x}$:
%  $$
% \begin{array}{lll}
% w_1(x)=h_1, 
% \qquad & w_6(x)=h_2 xh_1 x^{-1} h_2^{-1}, \qquad & w_{10}(x)=xh_2 x^{-1} h_2^{-1} \\
% w_2(x)=xh_1 x^{-1}, \qquad & w_7(x)=h_2^2 =1, \qquad & w_{11}(x)=h_2 xh_2 x^{-1}, \\
% w_3(x)=x^{-1}h_1 x, \qquad & w_8(x)=x^{-1} h_2 x h_2^{-1}, \qquad & w_{12}(x)=x^3, \\
% w_4(x)=h_2x^{-1}h_1xh_2, \qquad & w_9(x)=h_2 x^{-1} h_2 x,
% \qquad & w_{13}(x)=h_2 x^3 h_2^{-1}. \\
% w_5(x)=h_2 h_1 h_2^{-1}, & & 
%   \\
% \end{array}
%  $$
 $$
\begin{array}{lll}
w_1(x)=h_1, \qquad & w_2(x)=xh_1 x^{-1}, \qquad & w_3(x)=x^{-1}h_1 x, \\ w_4(x)=h_2x^{-1}h_1xh_2, \qquad & w_5(x)=h_2 h_1 h_2^{-1}, \qquad & w_6(x)=h_2 xh_1 x^{-1} h_2^{-1}, \\ w_7(x)=h_2^2 =1, \qquad & w_8(x)=x^{-1} h_2 x h_2^{-1}, \qquad &  w_9(x)=h_2 x^{-1} h_2 x, \\ w_{10}(x)=xh_2 x^{-1} h_2^{-1}, \qquad & w_{11}(x)=h_2 xh_2 x^{-1}, \qquad & w_{12}(x)=x^3, \\ & w_{13}(x)=h_2 x^3 h_2^{-1}. & 
\end{array}
 $$

Note that $w_7(x)$ is the trivial equation and so we can ignore it, and that $w_1(x), w_5(x)$ are constant non-trivial equations. The next step is to evaluate these twelve equations on $g=abab$, and to rewrite the result in terms of the free generators $\{p,q\}$:
 $$
\begin{array}{rclclcl}
v_1 & = & w_1(g) & = & h_1 & = & p, \\
v_2 & = & w_2(g) & = & gh_1g^{-1} & = & q^{-1}p^{-1}, \\
v_3 & = & w_3(g) & = & g^{-1}h_1 g & = & p^{-1}q\,p, \\
v_4 & = & w_4(g) & = & h_2 g^{-1} h_1 gh_2 & = & q\,p\,q\,p\,q^{-1}p^{-1}q^{-1}p^{-1}q^{-1}, \\
v_5 & = & w_5(g) & = & h_2 h_1 h_2^{-1} & = & q\,p\,q\,p^{-1}q^{-1}p^{-1}q^{-1}, \\
v_6 & = & w_6(g) & = & h_2 g h_1 g^{-1} h_2^{-1} & = & q\,p\,q^2\,p\,q^{-1}p^{-1}q^{-1}, \\
v_8 & = & w_8(g) & = & g^{-1} h_2 gh_2^{-1} & = & p^{-1}q^{-1}p^{-2}q^{-1}p^{-1}q^{-1}p^{-1}q^{-1}, \\
v_9 & = & w_9(g) & = & h_2 g^{-1} h_2 g & = & q\,p\,q\,p\,q\,p^2\,q\,p, \\
v_{10} & = & w_{10}(g) & = & gh_2 g^{-1} h_2^{-1} & = & q^{-4}p^{-1}q^{-1}, \\
v_{11} & = & w_{11}(g) & = & h_2 g h_2 g^{-1} & = & q\,p\,q^4, \\
v_{12} & = & w_{12}(g) & = & g^3 & = & q^{-1}p^{-1}q\,p, \\
v_{13} & = & w_{13}(g) & = & h_2 g^3 h_2^{-1} & = & q\,p\,q^2\,p\,q^{-1}p^{-1}q^{-1}p^{-1}q^{-1}. 
\end{array}
 $$ 
Now we look at the subgroup $V=\gen{v_1, v_2, v_3, v_4, v_5, v_6, v_8, v_9, v_{10}, v_{11}, v_{12}, v_{13}}\leqslant F=F(\{p,q\})$ and need to find a presentation for it \emph{in the shown generators}. Note that we should not eliminate redundant generators, even if obvious like $v_3=v_1^{-2}v_2^{-1}v_1$, because we may possibly lose non-trivial equations produced by them in the next step. According to the algorithm from the proof of Proposition~\ref{prop: contr Sch}, this requires to draw a flower automaton with twelve petals corresponding to these twelve generators, fold it until getting the corresponding $\{p,q\}$-Stallings automaton, and then analyze each one of the possible closed folding occurring along the tower ($g$ will be $H$-transcendental if and only if no such closed folding occurs). This is a long procedure which we can greatly abbreviate after realizing that the first two generators, $v_1=p$ and $v_2=q^{-1}p^{-1}$, already generate the full group $F$; in other words, the first two petals already fold down to the $\{p,q\}$-bouquet (with no closed folding). Therefore, each one of the remaining 10 petals will provoke a unique closed folding in the tower, corresponding to the relation expressing that generator in terms of $p=v_1$ and $q=v_1^{-1}v_2^{-1}$. Proceeding in this way, we get a full set of relations satisfied by $\{v_1, v_2, v_3, v_4, v_5, v_6, v_8, v_9, v_{10}, v_{11}, v_{12}, v_{13}\}$:
 $$
\begin{array}{rcl}
v_1^{-2} v_2^{-1} v_1 v_3^{-1} & = & 1, \vspace{.2cm} \\
v_1^{-1} v_2^{-2} v_1 v_2^3 v_1 v_4^{-1} & = & 1, \vspace{.2cm} \\
v_1^{-1} v_2^{-2} v_1^{-1} v_2^2 v_1 v_5^{-1} & = & 1, \vspace{.2cm} \\
v_1^{-1} v_2^{-2} v_1^{-1} v_2^{-1} v_1 v_2^2 v_1 v_6^{-1} & = & 1, \vspace{.2cm} \\
v_1^{-1} v_2 v_1^{-1} v_2^3 v_1 v_8^{-1} & = & 1, \vspace{.2cm} \\ 
v_1^{-1} v_2^{-3} v_1 v_2^{-1} v_1 v_9^{-1} & = & 1, \vspace{.2cm} \\
v_2 v_1 v_2 v_1 v_2 v_1 v_2^2 v_1 v_{10}^{-1} & = & 1, \vspace{.2cm} \\
v_1^{-1} v_2^{-2} v_1^{-1} v_2^{-1} v_1^{-1} v_2^{-1} v_1^{-1} v_2^{-1} v_{11}^{-1} & = & 1, \vspace{.2cm} \\
v_2 v_1^{-1} v_2^{-1} v_1 v_{12}^{-1} & = & 1, \vspace{.2cm} \\
v_1^{-1} v_2^{-2} v_1^{-1} v_2^{-1} v_1 v_2^3 v_1 v_{13}^{-1} & = & 1.
\end{array}
 $$
Finally, replacing $v_i$ by $w_i(x)$ in each one of these expressions, we get a collection of $H$-equations generating the ideal $I_H(g)$ (as normal subgroup of $H*\gen{x}$):
 $$
\begin{array}{c}
h_1^{-2} x h_1^{-1} x^{-1} h_1 x^{-1} h_1^{-1} x, \vspace{.2cm} \\
h_1^{-1} x h_1^{-2} x^{-1} h_1 x h_1^3 x^{-1} h_1 h_2^{-1} x^{-1} h_1^{-1} x h_2^{-1}, \vspace{.2cm} \\
h_1^{-1} x h_1^{-2} x^{-1} h_1^{-1} x h_1^2 x^{-1} h_1 h_2 h_1^{-1} h_2^{-1}, \vspace{.2cm} \\
h_1^{-1} x h_1^{-2} x^{-1} h_1^{-1} x h_1^{-1} x^{-1} h_1 x h_1^2 x^{-1} h_1 h_2 x h_1^{-1} x^{-1} h_2^{-1}, \vspace{.2cm} \\
h_1^{-1} x h_1 x^{-1} h_1^{-1} x h_1^3 x^{-1} h_1 h_2 x^{-1} h_2^{-1} x, \vspace{.2cm} \\
h_1^{-1} x h_1^{-3} x^{-1} h_1 x h_1^{-1} x^{-1} h_1 x^{-1} h_2^{-1} x h_2^{-1}, \vspace{.2cm} \\
h_1 x^{-1} h_1 x h_1 x^{-1} h_1 x h_1 x^{-1} h_1 x h_1^2 x^{-1} h_1 h_2 x h_2^{-1}, \vspace{.2cm} \\
h_1^{-1} x h_1^{-2} x^{-1} h_1^{-1} x h_1^{-1} x^{-1} h_1^{-1} x h_1^{-1} x^{-1} h_1^{-1} x h_1^{-1} h_2^{-1} x^{-1} h_2^{-1}, \vspace{.2cm} \\
h_1 x^{-1} h_1^{-1} x h_1^{-1} x^{-1} h_1 x^{-2}, \vspace{.2cm} \\
h_1^{-1} x h_1^{-2} x^{-1} h_1^{-1} x h_1^{-1} x^{-1} h_1 x h_1^3 x^{-1} h_1 h_2 x^{-3} h_2^{-1}. 
\end{array}
 $$
These are ten $H$-equations satisfied by the matrix $g=\left( \begin{smallmatrix} 1 & 0 \\ -2 & 1 \end{smallmatrix}\right)$, and in such a way that any $w(x)\in H*\gen{x}$ with $w(g)=I$ is a product of conjugates of them. In matrix form (for aesthetic reasons, we write $X$ instead of $x$) they read as follows: 
$$
\begin{array}{c}
\left( \begin{smallmatrix} 2 & 3 \\ 3 & 5 \end{smallmatrix}\right) X \left( \begin{smallmatrix} 1 & 1 \\ 1 & 2 \end{smallmatrix}\right) X^{-1}\left( \begin{smallmatrix} 2 & -1 \\ -1 & 1 \end{smallmatrix}\right) X^{-1}\left( \begin{smallmatrix} 1 & 1 \\ 1 & 2 \end{smallmatrix}\right) X=I, 

\\ \\

\left( \begin{smallmatrix} 1 & 1 \\ 1 & 2 \end{smallmatrix}\right) X \left( \begin{smallmatrix} 2 & 3 \\ 3 & 5 \end{smallmatrix}\right) X^{-1}\left( \begin{smallmatrix} 2 & -1 \\ -1 & 1 \end{smallmatrix}\right) X\left( \begin{smallmatrix} 13 & -8 \\ -8 & 5 \end{smallmatrix}\right) X^{-1}\left( \begin{smallmatrix} -3 & 8 \\ 1 & -3 \end{smallmatrix}\right) X^{-1}\left( \begin{smallmatrix} 1 & 1 \\ 1 & 2 \end{smallmatrix}\right) X\left( \begin{smallmatrix} -2 & 5 \\ -1 & 2 \end{smallmatrix}\right)=I,

\\ \\

\left( \begin{smallmatrix} 1 & 1 \\ 1 & 2 \end{smallmatrix}\right) X\left( \begin{smallmatrix} 2 & 3 \\ 3 & 5 \end{smallmatrix}\right) X^{-1}\left( \begin{smallmatrix} 1 & 1 \\ 1 & 2 \end{smallmatrix}\right) X\left( \begin{smallmatrix} 5 & -3 \\ -3 & 2 \end{smallmatrix}\right) X^{-1}\left( \begin{smallmatrix} 23 & -51 \\ -9 & 20 \end{smallmatrix}\right)=I,

\\ \\

\left( \begin{smallmatrix} 1 & 1 \\ 1 & 2 \end{smallmatrix}\right) X \left( \begin{smallmatrix} 2 & 3 \\ 3 & 5 \end{smallmatrix}\right) X^{-1}\left( \begin{smallmatrix} 1 & 1 \\ 1 & 2 \end{smallmatrix}\right) X\left( \begin{smallmatrix} 1 & 1 \\ 1 & 2 \end{smallmatrix}\right) X^{-1}\left( \begin{smallmatrix} 2 & -1 \\ -1 & 1 \end{smallmatrix}\right) X\left( \begin{smallmatrix} 5 & -3 \\ -3 & 2 \end{smallmatrix}\right) X^{-1}\left( \begin{smallmatrix} 3 & -8 \\ -1 & 3 \end{smallmatrix}\right)X\left( \begin{smallmatrix} 1 & 1 \\ 1 & 2 \end{smallmatrix}\right) X^{-1}\left( \begin{smallmatrix} -2 & 5 \\ -1 & 2 \end{smallmatrix}\right)=I,

\\ \\

\left( \begin{smallmatrix} 1 & 1 \\ 1 & 2 \end{smallmatrix}\right) X \left( \begin{smallmatrix} 2 & -1 \\ -1 & 1 \end{smallmatrix}\right) X^{-1}\left( \begin{smallmatrix} 1 & 1 \\ 1 & 2 \end{smallmatrix}\right) X\left( \begin{smallmatrix} 13 & -8 \\ -8 & 5 \end{smallmatrix}\right) X^{-1}\left( \begin{smallmatrix} 3 & -8 \\ -1 & 3 \end{smallmatrix}\right) X^{-1}\left( \begin{smallmatrix} -2 & 5 \\ -1 & 2 \end{smallmatrix}\right) X=I,

\\ \\

\left( \begin{smallmatrix} 1 & 1 \\ 1 & 2 \end{smallmatrix}\right) X \left( \begin{smallmatrix} 5 & 8 \\ 8 & 13 \end{smallmatrix}\right) X^{-1}\left( \begin{smallmatrix} 2 & -1 \\ -1 & 1 \end{smallmatrix}\right) X\left( \begin{smallmatrix} 1 & 1 \\ 1 & 2 \end{smallmatrix}\right) X^{-1}\left( \begin{smallmatrix} 2 & -1 \\ -1 & 1 \end{smallmatrix}\right) X^{-1}\left( \begin{smallmatrix} -2 & 5 \\ -1 & 2 \end{smallmatrix}\right) X \left( \begin{smallmatrix} -2 & 5 \\ -1 & 2 \end{smallmatrix}\right)=I,

\\ \\

\left[\left( \begin{smallmatrix} 2 & -1 \\ -1 & 1 \end{smallmatrix}\right) X^{-1} \left( \begin{smallmatrix} 2 & -1 \\ -1 & 1 \end{smallmatrix}\right) X\right]^3\left( \begin{smallmatrix} 5 & -3 \\ -3 & 2 \end{smallmatrix}\right) X^{-1}\left( \begin{smallmatrix} 3 & -8 \\ -1 & 3 \end{smallmatrix}\right) X \left( \begin{smallmatrix} -2 & 5 \\ -1 & 2 \end{smallmatrix}\right)=I,

\\ \\

\left( \begin{smallmatrix} 1 & 1 \\ 1 & 2 \end{smallmatrix}\right) X \left( \begin{smallmatrix} 2 & 3 \\ 3 & 5 \end{smallmatrix}\right) X^{-1}\left( \begin{smallmatrix} 1 & 1 \\ 1 & 2 \end{smallmatrix}\right) X\left( \begin{smallmatrix} 1 & 1 \\ 1 & 2 \end{smallmatrix}\right) X^{-1}\left( \begin{smallmatrix} 1 & 1 \\ 1 & 2 \end{smallmatrix}\right) X\left( \begin{smallmatrix} 1 & 1 \\ 1 & 2 \end{smallmatrix}\right) X^{-1}\left( \begin{smallmatrix} 1 & 1 \\ 1 & 2 \end{smallmatrix}\right) X\left( \begin{smallmatrix} -3 & 7 \\ -4 & 9 \end{smallmatrix}\right)X^{-1}\left( \begin{smallmatrix} -2 & 5 \\ -1 & 2 \end{smallmatrix}\right)=I,
 
\\ \\

\left( \begin{smallmatrix} 2 & -1 \\ -1 & 1 \end{smallmatrix}\right) X^{-1} \left( \begin{smallmatrix} 1 & 1 \\ 1 & 2 \end{smallmatrix}\right) X\left( \begin{smallmatrix} 1 & 1 \\ 1 & 2 \end{smallmatrix}\right) X^{-1}\left( \begin{smallmatrix} 2 & -1 \\ -1 & 1 \end{smallmatrix}\right) X^{-2}=I,

\\ \\

\left( \begin{smallmatrix} 1 & 1 \\ 1 & 2 \end{smallmatrix}\right) X \left( \begin{smallmatrix} 2 & 3 \\ 3 & 5 \end{smallmatrix}\right) X^{-1}\left( \begin{smallmatrix} 1 & 1 \\ 1 & 2 \end{smallmatrix}\right) X\left( \begin{smallmatrix} 1 & 1 \\ 1 & 2 \end{smallmatrix}\right) X^{-1}\left( \begin{smallmatrix} 2 & -1 \\ -1 & 1 \end{smallmatrix}\right) X\left( \begin{smallmatrix} 13 & -8 \\ -8 & 5 \end{smallmatrix}\right) X^{-1}\left( \begin{smallmatrix} 3 & -8 \\ -1 & 3 \end{smallmatrix}\right) X^{-3}\left( \begin{smallmatrix} -2 & 5 \\ -1 & 2 \end{smallmatrix}\right)=I.
\end{array}
 $$

\noindent Again, it is not at all obvious how to arrive at this conclusion using only basic matrix and linear algebra techniques. \qed
\end{ex}

We finish this note by highlighting that the groups of integral matrices of size 4 and bigger behave very differently with respect to algorithmic properties. It is well known that, for $n\geq 4$, and $G$ being each of $\operatorname{GL}_n(\Z)$, $\operatorname{PGL}_n(\Z)$, $\operatorname{SL}_n(\Z)$, or $\operatorname{PSL}_n(\Z)$, the group $\F_2\times \F_2$ embeds in $G$. As a consequence, non of these groups is coherent. And, by Proposition~\ref{prop:equivalence}, they are not eq-coherent either. This means the following:

\begin{cor}
Let $G$ be one of $\operatorname{GL}_n(\Z)$, $\operatorname{PGL}_n(\Z)$, $\operatorname{SL}_n(\Z)$, or $\operatorname{PSL}_n(\Z)$, with $n\geq 4$. There exist matrices $h_1,\ldots ,h_s;g\in G$ such that $g$ is algebraic over $H=\gen{h_1,\ldots ,h_s}\leqslant G$ but the ideal $I_H(g)$ \emph{is not finitely generated}, even as normal subgroup of $H*\gen{x}$. \qed
\end{cor}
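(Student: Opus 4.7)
The plan is to reduce the claim to the non-coherence of $\F_2\times\F_2$ and to the equivalence established in Proposition~\ref{prop:equivalence}. First, I would invoke the classical fact (recalled in the paragraph just before the corollary) that for $n\geq 4$ the group $\F_2\times\F_2$ embeds in each of $\operatorname{GL}_n(\Z)$, $\operatorname{PGL}_n(\Z)$, $\operatorname{SL}_n(\Z)$, and $\operatorname{PSL}_n(\Z)$. Coherence is inherited by subgroups (the defining ``f.g.\ $\Rightarrow$ f.p.'' condition survives restriction), so Mihailova's example of a finitely generated, non-finitely presented subgroup of $\F_2\times\F_2$ sits inside $G$ as a witness to the non-coherence of $G$ itself.

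Next, I would apply Proposition~\ref{prop:equivalence}(i): coherence and equational coherence are equivalent, hence $G$ is not eq-coherent. Unwinding the definition directly yields the existence of matrices $h_1,\ldots,h_s,g\in G$ such that, setting $H=\gen{h_1,\ldots,h_s}\leqslant G$, the ideal $I_H(g)\lhd H\ast\gen{x}$ fails to be finitely generated as a normal subgroup of $H\ast\gen{x}$. This is exactly the second half of the conclusion.

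Finally, I would verify the algebraicity assertion. Since the trivial normal subgroup $\{1\}\lhd H\ast\gen{x}$ is (trivially) finitely generated by the empty set, the failure of $I_H(g)$ to be finitely generated as a normal subgroup forces in particular $I_H(g)\neq\{1\}$. Thus there is a non-trivial $H$-equation $w(x)\in H\ast\gen{x}$ satisfying $w(g)=1$, which is precisely what it means for $g$ to be algebraic over $H$.

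I do not anticipate any real obstacle: all the difficulty has already been absorbed into Mihailova's theorem and Proposition~\ref{prop:equivalence}. The only subtle but unproblematic point worth flagging is that the ideal $I_H(g)$ depends only on the abstract pair $(H,g)$ and not on the ambient group in which they are computed: if $H\leqslant G'\leqslant G$ and $g\in G'$, then $w(g)=1$ holds in $G'$ iff it holds in $G$, so a counterexample living inside the subcopy $\F_2\times\F_2\leqslant G$ automatically serves as a counterexample in $G$ itself.
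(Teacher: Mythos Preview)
Your proposal is correct and follows essentially the same route as the paper: the paragraph immediately preceding the corollary invokes the embedding of $\F_2\times\F_2$ into $G$ for $n\geq 4$, deduces non-coherence, and then applies Proposition~\ref{prop:equivalence}(i) to conclude non-eq-coherence, which is exactly your argument. Your explicit remark that $I_H(g)$ not being finitely normally generated forces $I_H(g)\neq\{1\}$ (hence $g$ is $H$-algebraic) is a useful clarification that the paper leaves implicit in its \qed.
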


Similar statements are also true for size two matrices over more complicated rings. For example, according to~\cite[Page 734]{S74}, G. Baumslag observed that $\operatorname{SL}_2(\Z [1/n])$ is not coherent as soon as $n$ is divisible by two different primes $p,q$, with the subgroup $\gen{\left( \begin{smallmatrix} 1 & 1 \\ 0 & 1 \end{smallmatrix}\right),\, \left( \begin{smallmatrix} p/q & 0 \\ 0 & q/p \end{smallmatrix}\right)}\leqslant \operatorname{SL}_2(\Z [1/n])$ being finitely generated but not finitely presented. 

As usual (see~\cite{BMV10} for a different situation where the same phenomenon happens), the case of size 3 integral matrices is not known: these groups are known not to be virtually free (so, the arguments in the present note do not apply to them), but they are also known not to contain $\F_2\times \F_2$ either; see~\cite[Prop.~2.3]{RiZa}. As far as we are aware of, the following question asked by J.P.~Serre in 1979 under the coherence form (see~\cite[Page 734]{S74}) remains open almost fifty years later:

\begin{que}
Let $G$ be $\operatorname{GL}_3(\Z)$, $\operatorname{PGL}_3(\Z)$, $\operatorname{SL}_3(\Z)$, or $\operatorname{PSL}_3(\Z)$. Is $G$ effectively eq-coherent? Do there exist matrices $h_1,\ldots ,h_s;g\in G$ such that $g$ is algebraic over $H=\gen{h_1,\ldots ,h_s}\leqslant G$ but the ideal $I_H(g)$ \emph{is not} finitely generated, even as normal subgroup of $H*\gen{x}$? If not, is there an algorithm to compute generators for $I_H(g)$ as normal subgroup of $H*\gen{x}$? 
\end{que}

\section*{Acknowledgements}

\noindent The authors acknowledge support from the Spanish Agencia Estatal de Investigaci\'on through grant PID2021-126851NB-100 (AEI/ FEDER, UE).

\end{document}